\newtheorem{theorem}{Theorem}[section]
\newtheorem{lemma}[theorem]{Lemma}
\newtheorem{proposition}[theorem]{Proposition}
\theoremstyle{definition}
\newtheorem{definition}{Definition}
\theoremstyle{remark}
\newtheorem{remark}{Remark}
\begin{document}

\title{On shape sphere and rotation of three-body motion}
\author{
Wentian Kuang \thanks{Partially supported by NSFC(No.11901279). E-mail: kuangwt@sdu.edu.cn}\\
School of Mathematics, Shandong University\\
Jinan 250100, P.R. China
}
\date{}

\maketitle

\begin{abstract}
For three-body problem, R.Montgomery \cite{Mont1996} proved a reconstruction formula which calculates the overall rotation relating two similar triangle configurations if the initial triangular configuration is similar to the configuration formed at some later time. In this paper, we extend the formula so that it gives the angle of rotation for a single particle without requiring the similarity of initial and final configurations. The proof is different from that in \cite{Mont1996} and uses fundamental calculus. Moreover, we answered a question proposed in \cite{Mont1996}.
\end{abstract}

{\bf AMS classification number:} 70F07, 70E55\\

\section{Introduction}
Let $q_i\in \mathbb{R}^d(i=1,\dots,n)$ be $n$ particles. Each particle $q_i$ is endowed with a mass $m_i>0$. Configuration space is the set of all $n$-tuples $q=(q_1,\dots,q_n)$ with $q_i\in \mathbb{R}^d$. A motion of the $n$-particle system in time interval $[a,b]$ is a continuous path $q\in C([a,b],\mathbb{R}^{d\times n})$. If $q$ is smooth, then the velocity of system $v=\dot{q}$ is given by the time derivative of $q$. In the following of this paper, we always assume that any motion is smooth so that we can get rid of problems about regularity. Our results extends to piecewise smooth path readily.

It's well-known that translations can be separated from other motions. The velocity of a translation satisfies $v_1=v_2=\dots=v_n$ which represent the linear momentum of motion. For a conservative system, linear momentum is preserved by the motion. As in most papers, we assume that the center of mass is always at origin and the configuration space restricted to the following set
\[\Sigma=\Big\{q\in \mathbb{R}^{d\times n}\Big| \sum_{1}^{n}m_iq_i=0\Big\}.\]

Elements in $G=SO(d)$ act on $\Sigma$ by rotations in $\mathbb{R}^d$. Following \cite{Gu} of A.Guichardet, velocity at any configuration $q\in \Sigma$ can be decomposed into two parts: the rotation part $V_R$ and the internal part $V_I$. They are 
\begin{equation}\label{eqdecompose}
\begin{split}
V_R&=\Big\{v\in T_q(G\cdot q)\Big| G\cdot q\  \text{is the $G$-orbit of $q$}\Big\};\\
V_I&=\Big\{v\in (\mathbb{R}^d)^n\Big|\sum_{1}^{n} m_iq_i\wedge v_i=\sum_{1}^{n} m_iv_i=0\Big\}.
\end{split}
\end{equation}
Intuitively, the $V_R$ part follows from action of $SO(d)$ and $V_I$ part follows from scaling and changing of the similarity classes of configuration. If $d=2$ or $3$, clearly $v\in V_I$ if and only if the motion has angular momentum $0$ and $V_R$ is isomorphic to the space of admissible angular momentum at $q$.

Given a motion $q:[0,T]\rightarrow \Sigma$ such that the beginning configuration and the final configuration are similar, then they differ by a rotation(i.e. an elements in $SO(d)$). A basic question is: what can one tell about the rotation relating two boundary configurations? A.Guichardet in \cite{Gu} proved that if $n>d$, then every two points of an arbitrary $G$-orbit can be joint by a smooth \textsl{vibrational curve}(a motion such that $v\in V_I$ all the time). Thus both the rotation part and the internal part will contribute to the angle of rotation.

In this paper, we will focus on $n=3$ while $d=2$ or $3$. This question has been studied in \cite{Mont1996,Iw}. For planar $3$-body problem, all the oriented similar classes of configuration form a two sphere called by "shape sphere". Since the beginning configuration and final configuration are similar, they differs by a rotation of the plane and the motion projects to a closed curve on shape sphere. Then the angle of rotation is given by the following formula
\begin{equation}\label{eqplanar}
\Delta\theta=\int_{0}^{T}\frac{J}{I}+2\int_{D_0}\Omega_0,
\end{equation}
where $J$ is angular momentum and $I$ is moment of inertia. $D_0$ is the disk on shape sphere bounded by this closed curve and $\Omega_0$ is the standard area form on shape sphere.

For $d=3$, it not clear what is the angle of rotation between two similar oriented configuration. A natural idea is to put the two configurations on a plane by a certain way, as Montgomery did in \cite{Mont1996}. He assumed that the angular momentum is preserved by the motion. Denote $\mathbf{e}$ the axis of angular momentum and $\mathbf{n}(t)$ is the normal vector for $q(t)$. After rotating the normal vectors $\mathbf{n}(0)$ and $\mathbf{n}(T)$ to $\mathbf{e}$ along minimizing geodesics on sphere, the angle of rotation $\Delta\theta$ can be defined as planar case. Montgomery \cite{Mont1996} proved the following formula, which is an extension of \eqref{eqplanar}. 
\begin{equation}\label{eqmont}
\Delta\theta=\int_{0}^{T}\omega(t)+\int_{D}\Omega,
\end{equation}
where $\omega(t)$ depends on angular momentum, moment of inertia tensor and the axis of angular momentum, $\Omega$ is a two form on a "reduced configuration space" and $D$ is a disk bounded by a closed curve in reduced configuration space defined by the motion. Readers can refer to \cite{Mont1996} for details. 

In Montgomery's paper, he enlarged $\Sigma$ to form a set $\widetilde{\Sigma}$, the space of oriented configurations. An element in $\widetilde{\Sigma}$ is a configuration in $\Sigma$ together with a unit vector $\mathbf{n}$ which is orthogonal to the subspace spanned by vertices of $q$. This step is essential and the proof in \cite{Mont1996} rely on the geometry of $\widetilde{\Sigma}$. He noticed that it may be possible to carry out all calculations directly on $\Sigma/SO(2)$ and asked the following question: can reconstruction formula and calculations be reformulated solely in terms of $\Sigma/SO(2)$? Here $SO(2)$ is the rotation group about axis $\mathbf{e}$.

The problem for spatial case faces some difficulties which do not appear in planar case. Firstly, there is no natural orientation for a spatial configuration. Different ways of choosing normal vector will affects the result. Secondly, if $\mathbf{n}=\mathbf{-e}$, the minimizing geodesic from $\mathbf{n}$ to $\mathbf{e}$ is not unique. Finally, given velocity at a collinear configuration, we can not deduce how the configuration rotate. For these reasons, Montgomery had to worked on the space $\widetilde{\Sigma}$.

In this paper, we extend the formula \eqref{eqplanar} and \eqref{eqmont} to calculate the angle of rotation for any particle without requiring the conservation of angular momentum or the similarity of initial and final configuration. In fact, given the initial  configuration $q(0)$, angular momentum $J(t)$, momentum of inertia $I(t)$ and the projected curve on shape sphere $\gamma(t)$, the motion is uniquely defined. One should be able to known the rotation of each particle even though two boundary configurations are not similar. The proof in this paper is totally different from that in \cite{Mont1996}. All we need are some facts about shape sphere and fundamental calculus. The reconstruction formula considered in \cite{Mont1996} is a special case of our result. As a corollary, we give a positive answer to the question proposed by Montgomery in \cite{Mont1996}.

If the particle we concern does not go through origin, the angle of rotation is uniquely defined by polar coordinates. If it goes across the origin, then we should take a perturbation. The second term of formula may differ by $2\pi$ depending on the perturbation. We regard two angles of rotation the same if they differ by $2k\pi$ for $k\in \mathbb{Z}$. Thus the angle of rotation only depends on initial and final configurations.

We state our result for planar case and spatial case separately. For planar case, the angle of rotation is obviously defined by polar coordinates. Our result is the following theorem.

\begin{theorem}\label{thmplanar}
(i) For a piecewise smooth planar three-body motion $q(t)$ in time interval $[0,T]$. Let $\gamma(t)$ be the projected curve on shape sphere. Assume  $q_1(0)$ and $q_1(T)$ are away from origin. Then the angle of rotation from $q_1(0)$ to $q_1(T)$ is given by
	\begin{equation}\label{reconstruction}
	\Delta\theta=\int_0^T\frac{J}{I} \, dt+2\int_{D_\gamma}\Omega_0,
	\end{equation}
	where $J$ is the angular momentum, $I$ is the moment of inertia, $D_\gamma$ is a disk on shape sphere bounded by three curves: $\gamma(t)$, geodesic from $\gamma(0)$ to $C_1$, geodesic from $\gamma(T)$ to $C_1$. $\Omega_0$ is the standard area form on shape sphere so that the area of a disc is positive when the boundary curve rotates clockwise.\\
(ii) Let $q(t)$ and $\gamma(t)$ be the same as in (i), $Z_1(t)=q_3(t)-q_2(t)$. Assume $Z_1(0)$ and $Z_1(T)$ are away from origin. Then the angle of rotation from $Z_1(0)$ to $Z_1(T)$ is given by
\begin{equation}\label{reconstruction2}
\Delta\theta'=\int_0^T\frac{J}{I} \, dt+2\int_{D_\gamma'}\Omega_0,
\end{equation}
where $D_\gamma'$  is a disk on shape sphere bounded by three curves: $\gamma(t)$, geodesic from $\gamma(0)$ to $O_1$, geodesic from $\gamma(T)$ to $O_1$. 
\end{theorem}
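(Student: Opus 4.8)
The plan is to track the polar angle of the single particle directly and peel off the dynamical term $\int J/I$, leaving a purely geometric line integral over the shape curve. Identify $\mathbb{R}^2$ with $\mathbb{C}$ and write $a\wedge b=\operatorname{Im}(\bar a b)$ for the scalar cross product. Since $q_1(0)$ and $q_1(T)$ are away from the origin, $\theta_1=\arg q_1$ is defined along the motion and
\[
\Delta\theta=\int_0^T\dot\theta_1\,dt=\int_0^T\frac{q_1\wedge\dot q_1}{|q_1|^2}\,dt .
\]
First I would use the decomposition \eqref{eqdecompose} to write $\dot q=\tfrac{J}{I}\,(iq)+v^{\mathrm{int}}$ with $v^{\mathrm{int}}\in V_I$; the coefficient of the infinitesimal rotation $iq$ is forced to be $J/I$ because the angular momentum of the rotational part is $\sum_i m_i\,q_i\wedge(i q_i)=\sum_i m_i|q_i|^2=I$ times that coefficient. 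Using $q_1\wedge(iq_1)=|q_1|^2$ this yields the pointwise identity $\dot\theta_1=\tfrac{J}{I}+\tfrac{q_1\wedge v_1^{\mathrm{int}}}{|q_1|^2}$, and integration already produces the first term of \eqref{reconstruction}.

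It remains to identify the second integrand as a one-form on the shape sphere. Since both $q_1\wedge v_1^{\mathrm{int}}$ and $|q_1|^2$ are invariant under the diagonal $SO(2)$-action and $v^{\mathrm{int}}$ is the horizontal lift of $\dot\gamma$, the quantity $\tfrac{q_1\wedge v_1^{\mathrm{int}}}{|q_1|^2}$ is the value on $\dot\gamma$ of a one-form $\alpha_1$ defined on the shape sphere away from the point $C_1=\{q_1=0\}$, so $\int_0^T\tfrac{q_1\wedge v_1^{\mathrm{int}}}{|q_1|^2}\,dt=\int_\gamma\alpha_1$. Here $C_1$ really is a single point: in mass-normalized Jacobi coordinates $(z_1,z_2)\in\mathbb{C}^2$ one has $q_1=a_1z_1+b_1z_2$ with $a_1,b_1\in\mathbb{R}$, and $\{q_1=0\}$ is one $\mathbb{C}^\times$-orbit, hence one point of $\mathbb{CP}^1\cong S^2$. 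I would then compute $\alpha_1$ explicitly; most transparently, after a unitary normalization of the Jacobi coordinates placing $C_1$ at a pole, a short computation in geodesic polar coordinates $(\Theta,\psi)$ centered at $C_1$ gives the clean expression
\[
\alpha_1=\cos^2\!\tfrac{\Theta}{2}\,d\psi .
\]
Two facts follow at once: $\alpha_1$ vanishes on vectors tangent to the meridians through $C_1$, which are exactly the geodesics emanating from $C_1$; and, away from $C_1$, $d\alpha_1=-\tfrac12\sin\Theta\,d\Theta\wedge d\psi$, which equals $2\,\Omega_0$ for the standard (radius $\tfrac12$) area form $\Omega_0$ oriented so that areas are positive under clockwise traversal, the factor $2$ being the curvature of the reduction connection.

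Finally I would close $\gamma$ into $\partial D_\gamma$ by appending the geodesic from $\gamma(T)$ to $C_1$ and the geodesic from $C_1$ to $\gamma(0)$ and apply Stokes' theorem. By the first fact the two geodesic sides contribute nothing, so $\int_\gamma\alpha_1=\oint_{\partial D_\gamma}\alpha_1=\int_{D_\gamma}d\alpha_1=2\int_{D_\gamma}\Omega_0$, which is \eqref{reconstruction}. The one delicate point, and the place I expect the real work, is the behavior of $\alpha_1$ at the vertex $C_1$: since $\cos^2\tfrac{\Theta}{2}\to1$ there, $\alpha_1$ carries a vortex of strength $2\pi$, so Stokes must be applied on $D_\gamma$ with a small disk about $C_1$ excised, and the residual arc contributes a multiple of $2\pi$ depending on how the curve is perturbed off the origin; this is precisely the stated $2k\pi$ ambiguity and the reason for assuming $q_1(0),q_1(T)$ away from the origin. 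Part (ii) is identical after replacing the functional $q_1$ by $Z_1=q_3-q_2=a_1'z_1+b_1'z_2$; its zero locus is the $\{q_2=q_3\}$ shape, giving the vertex $O_1$, and the expression for $\alpha_1$ together with both consequences holds verbatim.
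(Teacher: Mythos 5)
Your reduction is sound up to and including the identification of $\int_0^T \frac{q_1\wedge v_1^{\mathrm{int}}}{|q_1|^2}\,dt$ with the integral over $\gamma$ of an $SO(2)$-invariant one-form $\alpha_1$ on the shape sphere; in substance this is the same computation as the paper's, which extracts $\int J/I$ by comparing $q$ with the zero-angular-momentum lift of $\gamma$ and then sweeps areas directly instead of invoking Stokes. The genuine error is your identification of the closure vertex. In the paper's notation, $C_1$ is the binary collision $\{q_2=q_3\}=\{Z_1=0\}$, while $\{q_1=0\}$ is the \emph{antipodal} point $O_1$. Since $q_1$ is a positive multiple of $Z_2$, one has $\theta_1=\xi_2$, and zero angular momentum gives $\dot\xi_2=\frac{|Z_1|^2}{I}\dot\xi$; hence
\begin{equation*}
\alpha_1=\frac{|Z_1|^2}{I}\,d\xi=\sin^2\!\Big(\tfrac{\Theta}{2}\Big)\,d\xi ,
\end{equation*}
where $\Theta$ is the polar angle measured from $C_1=\{Z_1=0\}$ and $\xi$ is the paper's azimuthal coordinate (your $\psi$). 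Thus $\alpha_1$ vanishes at $C_1$ — it extends smoothly by zero there, with no vortex at the vertex — and its $2\pi$-vortex sits at $O_1=\{q_1=0\}$. Your expression $\cos^2(\Theta/2)\,d\psi$ is this same form written in coordinates centered at $\{q_1=0\}$; by declaring that point to be ``$C_1$'' you have placed the vertex of $D_\gamma$ exactly at the vortex.

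That swap is fatal to your Stokes step. With the vertex at the vortex, excising a disk of radius $\epsilon$ about it leaves an inner boundary arc contributing $\cos^2(\epsilon/2)\,\Delta\xi\to\Delta\xi$, where $\Delta\xi$ is the vertex angle between the two bounding meridians, i.e.\ $\pm(\xi(T)-\xi(0))$ modulo $2\pi$; this is generically \emph{not} a multiple of $2\pi$ and cannot be absorbed into the stated $2k\pi$ ambiguity. Concretely, closing through $\{q_1=0\}$ and dropping that term produces formula \eqref{reconstruction2}, the rotation of $Z_1=q_3-q_2$, which differs from the rotation of $q_1$ by exactly $\Delta\xi$. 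The underlying geometry: along a zero-angular-momentum lift of a meridian arc both $\xi_1$ and $\xi_2$ are constant, and $\arg q_1$ remains well defined through $C_1$ (only bodies $2,3$ collide there, $q_1\ne 0$), whereas at $O_1$ the tracked particle itself sits at the origin and its angle is undefined — so one must never close the curve for $q_1$ through $O_1$. The repair is to close through the paper's $C_1$: then $\alpha_1$ vanishes on the two geodesic sides and at the vertex, Stokes applies with no excision, and the only residual $2\pi$ ambiguity is whether the chosen disk contains the vortex $O_1$ (the two complementary disks give answers differing by $2\int_{S^2}\Omega_0=2\pi$). Part (ii) needs the mirror correction: the form tracking $\arg Z_1$ is $-\frac{|Z_2|^2}{I}\,d\xi$, which vanishes at $O_1=\{q_1=0\}$ (the theorem's stated vertex for (ii)) and has its vortex at $C_1=\{q_2=q_3\}$ — the opposite of what you wrote.
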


For spatial case, let $\mathbf{e}$ be an unit vector and $X$ is the plane orthogonal to $\mathbf{e}$ in $\mathbb{R}^3$. Any oriented triangle configuration is defined by a triangle configuration $q$ and a normal vector $\mathbf{n}$ which gives the orientation. Whenever $\mathbf{n}\ne \mathbf{-e}$, there is a unique minimizing geodesic on unit sphere connecting $\mathbf{n}$ and $\mathbf{e}$. By moving $\mathbf{n}$ to $\mathbf{e}$ along the minimizing geodesic on the unit sphere, we get a configuration in $X$. Thus we can define a map $P: (q,\mathbf{n})\rightarrow x$ which maps a spatial configuration to a configuration in plane $X$.  Then the angle of rotation is defined to be that of $x(t)$ as in planar case. This is the same as what R.Montgomery did in \cite{Mont1996}. In that paper, the angular momentum is assumed to be preserved and $\mathbf{e}$ is chosen to be the axis of angular momentum. 

To get a reconstruction formula in spatial case, we need some assumption for motion. Denote
\begin{equation}\label{eqbad}
\Delta=\Big\{t\in [0,T]\Big|q(t)\  \text{is collinear} , J(t)\ne 0, \mathbf{e}\  \text{is not orthognal to the axis of configuration} \Big\}
\end{equation}
We call a collinear configuration $q(t)$ as above is "bad". Our theorem for spatial case can be stated as follows.

\begin{theorem}\label{thmspatial}
	Let $q(t)$ be a smooth $3$-body motion in $\mathbb{R}^3$ in time interval $[0,T]$. Assume $q(t)$ satisfies following conditions: (i) $\mathbf{n}(t)$ is smooth; (ii) for any $t_0$ such that $\mathbf{n}(t_0)=-\mathbf{e}$, it holds $\dot{\mathbf{n}}(t_0)\ne 0$; (iii) $\Delta$ has a zero measure. Then $x(t)=P(q(t),\mathbf{n}(t))$ is a smooth motion in the plane $X$. 
	
	Let $\gamma(t)$ be the projected curve on shape sphere, $\eta_1(t)$ be the angle coordinate of $P(q_1(t),\mathbf{n}(t))$ in $X$. Then 
	\begin{equation}\label{reconstructionspatial}
	\Delta\eta_1=\eta_1(T)-\eta_1(0)=\int_0^TF(J(t))dt+2\int_{D_\gamma}\Omega_0,
	\end{equation}
	where the definition of $F(J(t))$ can be found in the proof in section $4$, the second term on the right of \eqref{reconstructionspatial} is the same as in \eqref{reconstruction}.
\end{theorem}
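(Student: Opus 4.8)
The plan is to reduce the spatial statement to the planar reconstruction formula of Theorem~\ref{thmplanar}(i) by analyzing the projection map $P$ concretely. For each $t$ let $R(t)\in SO(3)$ be the rotation carrying $\mathbf{n}(t)$ to $\mathbf{e}$ along the minimizing geodesic; then $x_i(t)=R(t)q_i(t)$ and $x(t)=R(t)q(t)$. Since $R(t)$ is a rotation it preserves the similarity class of the configuration, so $x(t)$ and $q(t)$ project to the \emph{same} curve $\gamma(t)$ on the shape sphere and share the same reference point $C_1$; moreover $R(t)$ preserves the moment of inertia, so $I_x=\sum_i m_i|x_i|^2=\sum_i m_i|q_i|^2=I$. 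Consequently the disk $D_\gamma$ and the area term $2\int_{D_\gamma}\Omega_0$ are literally those of the planar theorem. Granting that $x(t)$ is a genuine smooth planar motion in $X$ (Step~1 below), Theorem~\ref{thmplanar}(i) applied to $x(t)$ gives $\Delta\eta_1=\int_0^T (J_x/I)\,dt+2\int_{D_\gamma}\Omega_0$, where $J_x$ is the angular momentum of the planar motion $x(t)$; this already identifies the correct $F$, namely $F(J(t))=J_x(t)/I(t)$.

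\textbf{Step 1} (smoothness of $x(t)$). Away from collinear configurations and away from the instants where $\mathbf{n}=-\mathbf{e}$, the geodesic rotation $R$ depends smoothly on $\mathbf{n}$, so by condition (i) the path $x(t)=R(t)q(t)$ is smooth. The only delicate instants are the crossings $\mathbf{n}(t_0)=-\mathbf{e}$, where the minimizing geodesic, and hence $R$, is a priori ambiguous. Here I would use the transversality hypothesis (ii): since $\dot{\mathbf{n}}(t_0)\ne 0$, the normal $\mathbf{n}(t)$ pierces the antipode $-\mathbf{e}$ along a definite tangent direction, which selects a limiting meridian plane and shows that $R(t)$ extends smoothly across $t_0$ to the rotation by $\pi$ about the (normalized) axis $\mathbf{e}\times\dot{\mathbf{n}}(t_0)$; thus $x(t)$ is smooth. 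Finally, condition (iii) guarantees that the bad set $\Delta$ of \eqref{eqbad}, on which the normal to a collinear configuration and hence $J_x/I$ is ill-defined, has measure zero, so the integrand is defined almost everywhere and integrable.

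\textbf{Step 2} (identification of $F$). Writing $\dot{x}_i=\dot{R}q_i+R\dot{q}_i$ and $\dot{R}R^{-1}=\widehat{\omega}$ with $\omega$ the angular velocity of $R(t)$, the planar angular momentum $J_x=\sum_i m_i\,(x_i\times\dot{x}_i)\cdot\mathbf{e}$ splits into two pieces. The part coming from $R\dot{q}_i$ equals $(R\mathbf{J})\cdot\mathbf{e}=\mathbf{J}\cdot(R^{-1}\mathbf{e})=\mathbf{J}\cdot\mathbf{n}$, where $\mathbf{J}=\sum_i m_i q_i\times\dot{q}_i$ is the spatial angular momentum and we used $R\mathbf{n}=\mathbf{e}$. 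The part coming from $\dot{R}q_i=\omega\times x_i$ equals $\sum_i m_i\bigl(|x_i|^2\omega-(x_i\cdot\omega)x_i\bigr)\cdot\mathbf{e}=I\,(\omega\cdot\mathbf{e})$, since $x_i\cdot\mathbf{e}=0$. Hence $J_x=\mathbf{J}\cdot\mathbf{n}+I(\omega\cdot\mathbf{e})$ and
\[
F(J(t))=\frac{\mathbf{J}\cdot\mathbf{n}}{I}+\omega\cdot\mathbf{e},
\]
where $\omega\cdot\mathbf{e}$ is computed from the elementary spherical geometry of the geodesic map as an explicit expression in $\mathbf{n}$, $\dot{\mathbf{n}}$ and $\mathbf{e}$ (in spherical coordinates about $\mathbf{e}$ it is the usual precession term). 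When $\mathbf{n}\equiv\mathbf{e}$ this reduces to $J/I$, recovering \eqref{reconstruction} and exhibiting Montgomery's formula as the special case where $\mathbf{e}$ is the conserved angular-momentum axis.

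The crux is Step~1 at the crossings $\mathbf{n}=-\mathbf{e}$: this is exactly the non-uniqueness of the minimizing geodesic that forced Montgomery to pass to the enlarged space $\widetilde{\Sigma}$. Showing that the transversality condition (ii) singles out a well-defined smooth continuation of $R(t)$, so that one may legitimately work on $\Sigma/SO(2)$ and invoke the planar formula, is the heart of the argument; once $x(t)$ is known to be a bona fide smooth planar three-body motion, Step~2 and the reduction are routine calculus.
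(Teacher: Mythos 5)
Your proposal is correct in substance, but it takes a genuinely different route from the paper. The paper never invokes Theorem~\ref{thmplanar} in the spatial case; it argues pointwise with the decomposition $v=v_I+v_R$ of \eqref{eqdecompose}: Guichardet's Proposition~\ref{progui} shows that the internal part fixes $\mathbf{n}$ and contributes the same area derivative as in the planar proof, while the rotational part is analyzed by decomposing $\sigma^{-1}(J)$ in the (generally non-orthogonal) basis $\{\mathbf{e}\wedge\mathbf{n},\mathbf{e},\mathbf{n}\}$, yielding the pointwise identity \eqref{eqdoteta1}, which is then integrated. You instead conjugate the whole motion into $X$ by the geodesic rotation, apply Theorem~\ref{thmplanar}(i) wholesale to $x(t)$, and compute the planar angular momentum $J_x=\mathbf{J}\cdot\mathbf{n}+I\,(\omega\cdot\mathbf{e})$. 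That computation is right (rotations preserve cross products, and $x_i\cdot\mathbf{e}=0$ kills the unwanted term of the triple product), and one can check on pure rotations about $\mathbf{e}$, about $\mathbf{n}$, and about $\mathbf{e}\wedge\mathbf{n}$ that your $F=\mathbf{J}\cdot\mathbf{n}/I+\omega\cdot\mathbf{e}$ coincides with the paper's $\sigma_{\mathbf{e}}^{-1}(J)+\sigma_{\mathbf{n}}^{-1}(J)$ at triangular configurations, so the two definitions of $F$ agree and you are proving the same statement. Your reduction is more economical and makes it transparent why the second term is literally the planar one; the paper's argument produces the pointwise formula directly and exposes exactly where it degenerates at collinear shapes.

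Two points in your write-up need repair. First, you misattribute the role of hypothesis (iii): $J_x/I$ is perfectly well defined and smooth on $\Delta$ (it is the angular momentum of the smooth planar motion $x(t)$), so your formula with integrand $J_x/I$ needs no measure-zero hypothesis at all. What fails on $\Delta$ is that $J_x/I$ is not a function of $J(t)$ and the reduced data: $\omega$ depends on $\dot{\mathbf{n}}$, and at a bad collinear configuration $\dot{\mathbf{n}}$ cannot be recovered from $J$ because $\sigma$ has a kernel there (the paper's remark after its proof exhibits two motions with the same $J$ but different $\dot{\eta}_1$). To convert your integrand into ``$F(J(t))$'' --- which is the content of the theorem and what the application to Montgomery's question in section 5 requires --- you must use Proposition~\ref{progui} to substitute $\dot{\mathbf{n}}=\sigma^{-1}(J)\times\mathbf{n}$ at triangular shapes, and hypothesis (iii) is precisely what makes this substitution valid almost everywhere. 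Second, smoothness of the geodesic rotation across a transversal crossing $\mathbf{n}(t_0)=-\mathbf{e}$ deserves a proof rather than an assertion: for instance, its quaternion lift is $\pm\bigl(\cos(\phi_n/2),\ \mathbf{n}\times\mathbf{e}/(2\cos(\phi_n/2))\bigr)$, and writing $1+\mathbf{n}\cdot\mathbf{e}=(t-t_0)^2g(t)$ with $g$ smooth and $g(t_0)=\tfrac{1}{2}|\dot{\mathbf{n}}(t_0)|^2>0$ (Hadamard's lemma, using (i) and (ii)), the sign-corrected lift is smooth through $t_0$; your identification of the limiting rotation as the half-turn about $\mathbf{e}\times\dot{\mathbf{n}}(t_0)$ is then correct. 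This step is the counterpart of the paper's removable-singularity argument in angle coordinates, and it is where your reduction earns the right to quote the planar theorem.
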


\begin{remark}\label{rm1.1}
	(i)When $\mathbf{n}(t)=\mathbf{e}$ all the time, it is the planar case and the first term on the right of \eqref{reconstructionspatial} is the same as in \eqref{reconstruction}. Because $\mathbf{n}$ is not well-defined at collinear configuration and $P(q,\mathbf{n})$ is not well-defined when $\mathbf{n}=\mathbf{-e}$, the assumptions are needed to ensure that $x(t)$ is smooth. These assumptions are automatically satisfied for a generic motion.
	
	(ii)Note that the shape sphere is space of similarity classes of oriented triangles. If $q(t)$ is a triangular configuration, then one of $\gamma(t)$ and $\mathbf{n}(t)$ defines the other.
	
	(iii)There is also a result similar to \eqref{reconstruction2}, we omit the statement here.
\end{remark}

Since velocity can be decomposed into two parts, the strategy is to study how the two parts effect the angle velocity of a particle. Because we use a different approach from \cite{Mont1996}, formula \eqref{reconstructionspatial} is also difference from \eqref{eqmont}. In section \ref{sec4}, a map $\sigma$ is defined from $Lie(G)$ to angular momentum $J$. $\omega(t)$ in \eqref{eqmont} is actually $\mathbf{e}\cdot \sigma^{-1}(J(t))$, while $\sigma_{\mathbf{e}}^{-1}(J(t))$ and $\sigma_{\mathbf{n}}^{-1}(J(t))$ are given by direct decomposition of $ \sigma^{-1}(J(t))$ about a basis define by $\mathbf{e}$ and $\mathbf{n}$. The second term in the right of\eqref{eqmont} is an integral on reduced configuration space, which is a two-sphere bundle over the shape sphere. The second term in our formula is an integral on the shape sphere.

The paper is organized as follows. In section $2$, we introduce some necessary foundations about shape sphere. In section $3$, we prove reconstruction formula for planar case. As an application, we give a characterization for the shape sphere. In section $4$, we prove the reconstruction formula for spacial case. In last section, we apply the formula to Montgomery's case and give a positive answer to the question proposed in \cite{Mont1996}.

\section{Introduction to shape sphere}
In this section, we give an introduction to the shape sphere for $3$ body problem. We only list some necessary results for later proof of main theorem. For more details readers can refer to \cite{Mont2015}. 

For planar case, our configuration space is
\begin{equation}\label{configuration}
\Sigma=\Big\{ q \in \mathbb{C}^{3} \, \bigg{|} \, \sum_{i=1}^3 m_iq_i =0 \Big \}.
\end{equation}
\begin{definition}
Two configurations in $\Sigma$ are oriented congruent if there is a rotation taking one  to the other. \textsl{Shape space} is the space of oriented congruence classes.
\end{definition}

\begin{definition}
	For the planar three-body problem, the \textsl{Jacobi coordinates} $(\tilde{Z}_1,\, \tilde{Z}_2) \in \mathbb{C}^{2}$ and the \textsl{normalized Jacobi coordinates} $(Z_1,\, Z_2) \in \mathbb{C}^{2}$ with respect to $q_1$ are given by
	\begin{equation}\label{jacobi}
	\begin{aligned}
	\tilde{Z}_1&=q_3-q_2,\ \ \ \ \tilde{Z}_2=q_1-\frac{m_2q_2+m_3q_3}{m_2+m_3},\\
	Z_1&=\mu_1(q_3-q_2),\ \ \ \ Z_2=\mu_2\Big(q_1-\frac{m_2q_2+m_3q_3}{m_2+m_3}\Big),
	\end{aligned}
	\end{equation}
	where $\frac{1}{\mu_1^2}=\frac{1}{m_2}+\frac{1}{m_3}$ and $\frac{1}{\mu_2^2}=\frac{1}{m_1}+\frac{1}{m_2+m_3}$.
\end{definition}
By normalizing, one can greatly simplified the notations. Clearly, the normalized Jacobi coordinates $(Z_1,Z_2)$ is one-to-one corresponded to the centered configuration $q\in \Sigma$. The following fact can be proved by direct computation.

\begin{lemma}\label{lemmaIJ}
	The momentum of inertia $I=|Z_1|^2+|Z_2|^2$ and the angular momentum is $J=-Im(Z_1 \overline{\dot{Z}}_1+Z_2\overline{\dot{Z}}_2).$
\end{lemma}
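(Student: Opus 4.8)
The plan is to recognize that the normalized Jacobi map is an isometry for the mass metric, so that both identities reduce to the single statement that this linear change of variables preserves one Hermitian quadratic form. I would start from the physical definitions: for a planar configuration with centre of mass at the origin, the moment of inertia is $I=\sum_{i=1}^3 m_i|q_i|^2$ and the angular momentum is $J=\sum_{i=1}^3 m_i\operatorname{Im}(\overline{q_i}\dot q_i)$. Introducing the mass-Hermitian product $\langle u,w\rangle_m=\sum_{i=1}^3 m_i\overline{u_i}w_i$ on $\mathbb{C}^3$, these read $I=\langle q,q\rangle_m$ and $J=\operatorname{Im}\langle q,\dot q\rangle_m$, which already exhibits the two quantities as the diagonal value and the imaginary off-diagonal value of the same form.

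The core claim I would isolate and prove is that the $\mathbb{C}$-linear map $\Phi:q\mapsto(Z_1,Z_2)$, restricted to $\Sigma$, is an isometry onto $\mathbb{C}^2$ with its standard Hermitian product; that is, $\langle q,q'\rangle_m=\overline{Z_1}Z_1'+\overline{Z_2}Z_2'$ for all $q,q'\in\Sigma$. Since $\Phi$ is complex-linear, the Hermitian polarization identity reduces this to checking the diagonal case, i.e. the single real identity $\sum_i m_i|q_i|^2=|Z_1|^2+|Z_2|^2$ subject to $\sum_i m_iq_i=0$. This is precisely where the values of $\mu_1,\mu_2$ enter: they are chosen so that the cross terms cancel and the coefficients match, and verifying this identity is the one place genuine computation is required.

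For the identity I would apply König's (parallel-axis) theorem in two stages. Writing $c_{23}=(m_2q_2+m_3q_3)/(m_2+m_3)$ for the centre of mass of the pair $\{2,3\}$, the standard splitting $m_2|q_2|^2+m_3|q_3|^2=(m_2+m_3)|c_{23}|^2+\frac{m_2m_3}{m_2+m_3}|q_3-q_2|^2$ produces $|Z_1|^2$ as its last term. The centre-of-mass constraint gives $q_1=-\frac{m_2+m_3}{m_1}c_{23}$, hence $q_1-c_{23}=-\frac{M}{m_1}c_{23}$ with $M=m_1+m_2+m_3$; substituting shows that $m_1|q_1|^2+(m_2+m_3)|c_{23}|^2$ collapses to $\mu_2^2|q_1-c_{23}|^2=|Z_2|^2$, which closes the identity. (A brute-force alternative—inverting the Jacobi relations and substituting directly—also works but is messier, so I would favour the two-stage König route.)

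Finally I would harvest both formulas from the isometry. Setting $q'=q$ gives $I=|Z_1|^2+|Z_2|^2$. Differentiating the constraint shows $\dot q\in\Sigma$ and $\Phi(\dot q)=(\dot Z_1,\dot Z_2)$ by linearity, so setting $q'=\dot q$ yields $\langle q,\dot q\rangle_m=\overline{Z_1}\dot Z_1+\overline{Z_2}\dot Z_2$; taking imaginary parts and using $\operatorname{Im}(\overline{Z}\dot Z)=-\operatorname{Im}(Z\overline{\dot Z})$ gives $J=-\operatorname{Im}(Z_1\overline{\dot Z}_1+Z_2\overline{\dot Z}_2)$, matching the stated sign convention. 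The only real obstacle is the algebraic verification of the mass-norm identity; once that is in hand, the passage to angular momentum is formal, coming for free from complex-linearity and polarization.
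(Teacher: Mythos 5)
Your proof is correct. Note that the paper itself offers no argument for this lemma: it simply asserts that the fact ``can be proved by direct computation,'' which in practice means substituting the definitions of $Z_1,Z_2$ into $I=\sum_i m_i|q_i|^2$ and $J=\sum_i m_i\operatorname{Im}(\overline{q_i}\dot q_i)$, expanding, and using $\sum_i m_i q_i=0$ (and its time derivative) to cancel cross terms in two separate calculations. Your route organizes this more conceptually: since $\Sigma$ is a complex subspace and the normalized Jacobi map $\Phi$ is $\mathbb{C}$-linear, Hermitian polarization reduces both identities to the single real identity $\sum_i m_i|q_i|^2=|Z_1|^2+|Z_2|^2$ on $\Sigma$, and your two-stage K\"onig argument verifies exactly that: the pair splitting produces $\mu_1^2|q_3-q_2|^2=|Z_1|^2$, and the constraint $q_1=-\frac{m_2+m_3}{m_1}c_{23}$ collapses $m_1|q_1|^2+(m_2+m_3)|c_{23}|^2$ to $\mu_2^2|q_1-c_{23}|^2=|Z_2|^2$ (both steps check out against the stated values of $\mu_1,\mu_2$). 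The angular momentum formula then comes for free from $\dot q\in\Sigma$, $\Phi(\dot q)=(\dot Z_1,\dot Z_2)$, and the sign identity $-\operatorname{Im}\bigl(Z_1\overline{\dot Z}_1+Z_2\overline{\dot Z}_2\bigr)=\operatorname{Im}\bigl(\overline{Z}_1\dot Z_1+\overline{Z}_2\dot Z_2\bigr)$, which is consistent with the paper's later use $J=r_1^2\dot\xi_1+r_2^2\dot\xi_2$ in the meridian lemma of section 3. What your approach buys is economy and robustness: only one genuine computation is needed, and the sign convention for $J$ is forced by linearity rather than checked by a second expansion; what the brute-force route buys is that it needs no structural observation about Hermitian forms, only patience.
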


Let $\omega_i\in \mathbb{R} \, (i=1,2,3,4)$ be four real variables satisfying
\begin{equation}\label{omega}
\left\{\begin{aligned}
\omega_4+\omega_1&=|Z_1|^2 \\
\omega_4-\omega_1&=|Z_2|^2 \\
\omega_2+\sqrt{-1}\omega_3&=\overline{Z}_1Z_2
\end{aligned}
\right.
\end{equation}

\begin{remark}
	The definition of normalized Jacobi coordinates and $\omega_i$s in \eqref{jacobi} and \eqref{omega} are carefully chosen.  In this way, the Lagrangian configuration with the particles $1,2,3$ rotating counterclockwise is on the upper hemisphere.  It's also convenient for later use.
\end{remark}

Define the map $\pi$ from $\Sigma$ to $\mathbb{R}^3$ by
\begin{equation*}
\pi :(q_1,q_2,q_3)\rightarrow (Z_1, Z_2) \rightarrow (\omega_1,\omega_2,\omega_3).
\end{equation*}
The following theorem is from  \cite{Mont2015}.
\begin{theorem}\label{thmshape}
	Shape space is homeomorphic to $\mathbb{R}^3$. The quotient map $\pi$ enjoys the following properties:\\
	(a) Two triangles $q, q' \in \mathbb{C}^3$ are oriented congruent if and only if $\pi(q) = \pi(q')$.\\
	(b) $\pi$ is onto.\\
	(c) $\pi$ projects the triple collision locus onto the origin.\\
	(d) $\pi$ projects the locus of collinear triangles onto the plane $\omega_3 = 0$, where
	$(\omega_1,\omega_2,\omega_3)$ are standard linear coordinates on $\mathbb{R}^3$. Moreover, $\omega_3$ is the
	signed area of the corresponding triangle, up to a mass-dependent constant.\\
	(e) Let $\sigma : \mathbb{R}^3 \rightarrow \mathbb{R}^3$ be the reflection across the collinear plane: $\sigma(\omega_1,\omega_2,\omega_3)
	= (\omega_1,\omega_2,-\omega_3)$. Then the two triangles $q, q'  \in \mathbb{C}^3$ are congruent if and only
	if either $\pi(q) = \pi(q')$ or $\pi(q) = \sigma(\pi(q'))$.\\
	(f) $\omega_1^2+ \omega_2^2+\omega_3^2=\omega_4^2=(\frac{1}{2}I)^2$.\\
   (g) $R =\sqrt{I}$ is the shape space distance to triple collision.
\end{theorem}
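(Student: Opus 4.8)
The plan is to recognize the composite $\pi$ as a rescaled Hopf fibration and to read off all of (a)--(g) from the solved form of \eqref{omega}. The first step is to invert the defining relations,
\[
\omega_1=\tfrac12(|Z_1|^2-|Z_2|^2),\quad \omega_2=\operatorname{Re}(\overline Z_1 Z_2),\quad \omega_3=\operatorname{Im}(\overline Z_1 Z_2),\quad \omega_4=\tfrac12(|Z_1|^2+|Z_2|^2)=\tfrac12 I,
\]
which gives (f) immediately from the elementary identity $\tfrac14(|Z_1|^2-|Z_2|^2)^2+|\overline Z_1 Z_2|^2=\tfrac14(|Z_1|^2+|Z_2|^2)^2$. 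The second preliminary is that a plane rotation by $\theta$ sends each $q_i\mapsto e^{\sqrt{-1}\theta}q_i$, so, the normalized Jacobi coordinates being linear in the $q_i$, it acts diagonally as $(Z_1,Z_2)\mapsto(e^{\sqrt{-1}\theta}Z_1,e^{\sqrt{-1}\theta}Z_2)$. Under this action $|Z_1|^2$, $|Z_2|^2$ and $\overline Z_1 Z_2$ are all invariant, so $\pi$ is constant on oriented congruence classes; this is the ``only if'' half of (a).

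For the ``if'' half of (a) I would analyze the fibers directly. If $\pi(q)=\pi(q')$ then $|Z_j|=|Z_j'|$ and $\overline Z_1 Z_2=\overline{Z_1'}Z_2'$. When $Z_1,Z_2$ are both nonzero, writing $Z_j=r_je^{\sqrt{-1}\phi_j}$ the modulus conditions give $r_j=r_j'$ and the last equation pins down the relative phase $\phi_2-\phi_1=\phi_2'-\phi_1'$; setting $\theta=\phi_1'-\phi_1$ yields $Z_j'=e^{\sqrt{-1}\theta}Z_j$, i.e.\ oriented congruence. The cases $Z_1=0$ or $Z_2=0$ are treated separately and are immediate. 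Surjectivity (b) is the reverse construction: given $(\omega_1,\omega_2,\omega_3)$ set $\omega_4=\sqrt{\omega_1^2+\omega_2^2+\omega_3^2}$ (forced by (f)), so that $|Z_1|^2=\omega_4+\omega_1\ge 0$ and $|Z_2|^2=\omega_4-\omega_1\ge 0$; solving for the moduli and a single phase, the required consistency $|Z_2|^2|Z_1|^2=\omega_2^2+\omega_3^2$ is exactly the relation $\omega_4^2-\omega_1^2=\omega_2^2+\omega_3^2$ of (f). Together with continuity, (a) and (b) exhibit $\pi$ as a continuous bijection from shape space $\Sigma/SO(2)$ onto $\mathbb{R}^3$.

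The remaining items are geometric readings of the same formulas. For (c), triple collision means $q_1=q_2=q_3$, equivalently $Z_1=Z_2=0$, equivalently all $\omega_i=0$; conversely $(\omega_1,\omega_2,\omega_3)=0$ forces $\omega_4=0$, hence $I=0$ and $Z_1=Z_2=0$. For (d), collinearity is equivalent to the Jacobi vectors $\tilde Z_1=q_3-q_2$ and $\tilde Z_2$ being parallel, i.e.\ $\operatorname{Im}(\overline{\tilde Z}_1\tilde Z_2)=0$; since $Z_j=\mu_j\tilde Z_j$ this is $\omega_3=0$, and because $\operatorname{Im}(\overline{\tilde Z}_1\tilde Z_2)$ is a translation-invariant antisymmetric area form it is a mass-dependent multiple of the signed triangle area. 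For (e), any orientation-reversing isometry fixing the origin has the form $z\mapsto e^{\sqrt{-1}\alpha}\overline z$, so $Z_j\mapsto e^{\sqrt{-1}\alpha}\overline{Z_j}$, which fixes $\omega_1,\omega_2$ and sends $\omega_3\mapsto-\omega_3$ independently of $\alpha$; this is precisely $\sigma$, and combined with (a) it gives the unoriented congruence criterion. Finally (g) is Riemannian: in normalized Jacobi coordinates the mass metric is the flat metric on $\mathbb{C}^2\cong\mathbb{R}^4$, the radial function $\sqrt{|Z_1|^2+|Z_2|^2}=\sqrt I$ is $SO(2)$-invariant and horizontal, and since triple collision is the unique fixed point at the origin the quotient distance from $[q]$ to it equals $\sqrt I$.

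The part I expect to require the most care is the fiber analysis of (a) over the degenerate strata: the clean diagonal-phase argument applies only where both $Z_1$ and $Z_2$ are nonzero, so one must separately check injectivity and surjectivity over the loci $Z_1=0$ and $Z_2=0$ (the poles of the shape sphere) and over the origin, and then upgrade the continuous bijection of the previous paragraph to a genuine homeomorphism onto $\mathbb{R}^3$. The latter follows because $\pi$ is proper and open on each stratum, but checking that this behavior is uniform across the singular fiber over the triple-collision point is the one genuinely nontrivial point of the argument.
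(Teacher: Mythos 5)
Your proposal is essentially correct, but note that the paper itself contains no proof of this statement to compare against: Theorem~\ref{thmshape} is imported wholesale from Montgomery's Monthly article (the text preceding it says ``The following theorem is from \cite{Mont2015}''), and the present paper only uses its conclusions. Measured against that source, your route is the standard one and matches Montgomery's: the quantities $\omega_1,\omega_2+\sqrt{-1}\,\omega_3,\omega_4$ are the basic quadratic invariants of the circle action $(Z_1,Z_2)\mapsto(e^{\sqrt{-1}\theta}Z_1,e^{\sqrt{-1}\theta}Z_2)$, i.e.\ the (coned-off) Hopf map, the identity in (f) is algebra, the fiber computation gives (a), and (b)--(e), (g) are read off from the same formulas, exactly as you do; your verification of each item is sound, including the degenerate strata $Z_1=0$, $Z_2=0$ in (a) and the reflection computation $\overline{Z}_1Z_2\mapsto\overline{\overline{Z}_1Z_2}$ in (e). Two refinements are worth recording. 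First, in (e) you should observe that an isometry carrying one centered configuration to another automatically fixes the origin (apply it to the center of mass), which is what licenses restricting to maps of the form $z\mapsto e^{\sqrt{-1}\alpha}\overline{z}$. Second, the point you flag as ``genuinely nontrivial''---upgrading the continuous bijection $\Sigma/SO(2)\to\mathbb{R}^3$ to a homeomorphism---needs no stratum-by-stratum openness argument and no special treatment of the triple-collision fiber: since $2\,|\pi(q)|=2\,\omega_4=I$, preimages under $\pi$ of bounded sets are bounded, so $\pi$ is proper; a proper continuous map into a locally compact Hausdorff space is closed, hence $\pi$ is a quotient map, and because its fibers are precisely the $SO(2)$-orbits by (a), the induced bijection on $\Sigma/SO(2)$ is a homeomorphism. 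With these two touch-ups your argument is a complete and faithful reconstruction of the cited result.
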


By Theorem \ref{thmshape}, the shape space is isomorphic to $\mathbb{R}^3$ and the Euclidian norm of the vector $(\omega_1,\omega_2,\omega_3)$ is one half of the moment of inertia. By fixing the moment of inertia, we get a sphere in shape space. 

As commented in \cite{Mont2015}, ``the shape space is not isometric to $\mathbb{R}^3$: Shape space geometry is not Euclidean. However, the geometry does have spherical symmetry. Each sphere centered at triple collision is isometric to the standard sphere, up to a scale factor. We identify these spheres with the shape sphere."

\begin{definition}
	The \textsl{shape sphere} is the sphere in the shape space given by $I=1$, i.e. the sphere given by $\omega_1^2+\omega_2^2+\omega_3^2=\frac{1}{4}$.
\end{definition}

For our later use, we mark some points on the shape sphere.

\noindent \textbf{Notations:}  Let $C_1$ denote the collision configuration when $q_2=q_3$, $O_1$ denotes the configuration when $q_1=0$. $C_2$, $O_2$, $C_3$ and $O_3$ denote the corresponding configurations respectively. Let $P_1$ and $P_2$ denote the corresponding pole on the upper and lower hemisphere. Let $L_1$ denote the Lagrangian configuration when body $1,2,3$ rotate counterclockwise, $L_2$ is the other Lagrangian configuration with opposite orientation. Let $E_i(i=1,2,3)$ denote the corresponding Euler configuration. In general, $E_i\ne O_i(i=1,2,3)$ and $L_j\ne P_j(j=1,2)$. At the end of section $3$, figure \ref{2} shows how these marked points are located on the shape sphere.
%
%
%

By \eqref{jacobi} and \eqref{omega}, $O_1$ is $(1/2,0,0)$ and $C_1$ is $(-1/2,0,0)$. Clearly, we have different ways to define Jacobi coordinates. Different choice of Jacobi coordinates differ by reflections or rotations of shape space. For example, by letting $\tilde{Z}_1=q_2-q_3$ in \eqref{jacobi} the resulting shape sphere differs to original one by a reflection with respect to $\omega_2,\omega_3$-plane. If we define Jacobi coordinates by $\tilde{Z}_1=q_2-q_1$ and $\tilde{Z}_2=q_3-\frac{m_2q_2+m_1q_1}{m_2+m_1}$, then the resulting shape sphere differs to original one by a rotation such that $O_3$ is $(1/2,0,0)$ and $C_3$ is $(-1/2,0,0)$. In the following of this paper, we will always use the original coordinate system defined by \eqref{jacobi} and \eqref{omega} unless explicitly told. 

We can write $(\omega_2,\omega_3)$ in polar coordinates $(r,\xi)$. Then $\xi$ is well defined on $S\backslash \{O_1,C_1\}$ and $\omega_2+\sqrt{-1}\omega_3=re^{i\xi}$. Let $Z_1=r_1e^{i\xi_1}$ and $Z_2=r_2e^{i\xi_2}$. By \eqref{omega}, it follows that
\[\omega_2+\sqrt{-1}\omega_3=r_1r_2e^{i(\xi_2-\xi_1)}.\]
Hence, there exists some integer $k$, such that
\[r=r_1r_2,\ \ \ \ \ \xi=\xi_2-\xi_1+2k\pi.\]

Without loss of generality, we can assume $k=0$. Then $\xi=\xi_2-\xi_1$ is the angle of rotation from $Z_1$ to $Z_2$. $C_1$ is a singular point for $\xi_1$ while $O_1$ is a singular point for $\xi_2$. Thus we have

\begin{proposition}\label{proangle}
Given any $\xi\in [0,2\pi)$, let $l_\xi$ be the meridian collecting $C_1$ and $O_1$. The projection of a configuration on the shape space locates on $l_\xi$ if and only if the angle of rotation from $Z_1$ to $Z_2$ is $\xi$.
\end{proposition}

\section{Reconstruction formula for planar case}
In this section, we give a prove of formula \eqref{reconstruction}. Here we use a different approach by fundamental calculus which relies little on geometry.

In this section, the configuration space is 
\[\Sigma=\Big\{q\in \mathbb{C}^3\Big| \sum_{1}^{n}m_iq_i=0\Big\}.\]

\begin{lemma}\label{lemma0angular}
	For any parameterized curve $\gamma: [0,s]\rightarrow \mathbb{R}^3$ in the shape space away from triple collision and a configuration $q(0)$ realizing $\gamma(0)$, there is a unique zero-angular-momentum motion $q:[0,\, s]\rightarrow \Sigma$ realizing $\gamma$. 
\end{lemma}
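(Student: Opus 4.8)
The key observation is that, away from triple collision, the quotient map $\pi$ of Theorem \ref{thmshape} exhibits $\Sigma\setminus\{0\}$ as a principal $SO(2)$-bundle over the shape space $\mathbb{R}^3\setminus\{0\}$: by part (a) two configurations have the same image iff they differ by a rotation, and the $SO(2)$-action $(Z_1,Z_2)\mapsto(e^{i\phi}Z_1,e^{i\phi}Z_2)$ is \emph{free} off triple collision, since $e^{i\phi}(Z_1,Z_2)=(Z_1,Z_2)$ with $(Z_1,Z_2)\neq 0$ forces $e^{i\phi}=1$. From this point of view the statement is exactly the existence and uniqueness of the horizontal lift of $\gamma$ through $q(0)$ for the connection whose horizontal spaces are the zero-angular-momentum velocities $V_I$; the plan is to prove it by reducing to a scalar ODE rather than invoking connection theory.

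First I would fix any smooth \emph{reference lift} $\tilde q(t)$ of $\gamma$, i.e. a smooth curve in $\Sigma$ with $\pi(\tilde q(t))=\gamma(t)$. Such a lift exists because $\pi$ is a smooth submersion (equivalently, a fibre bundle) over $\mathbb{R}^3\setminus\{0\}$; concretely one can build it from \eqref{omega}, writing $|Z_1|^2=\omega_4+\omega_1$ and $|Z_2|^2=\omega_4-\omega_1$ with $\omega_4=\sqrt{\omega_1^2+\omega_2^2+\omega_3^2}$, and fixing the phase by demanding $Z_1\geq 0$ where $Z_1\neq 0$ and $Z_2\geq 0$ where $Z_2\neq 0$. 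Since $\gamma$ avoids triple collision we have $I(t)=2\omega_4(t)>0$, so the two charts cover $[0,s]$ and finitely many overlap switches suffice to patch them into a single smooth lift.

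Every lift realizing $\gamma$ then has the form $q(t)=e^{i\phi(t)}\tilde q(t)$ for a function $\phi$ determined up to the choice of $\phi(0)$, and I would compute the angular momentum of $q$ in terms of that of $\tilde q$. Using $\dot Z_j=e^{i\phi}(\dot{\tilde Z}_j+i\dot\phi\,\tilde Z_j)$ one finds $Z_j\overline{\dot Z_j}=\tilde Z_j\overline{\dot{\tilde Z}_j}-i\dot\phi\,|\tilde Z_j|^2$, and summing over $j=1,2$ and applying Lemma \ref{lemmaIJ} gives the transformation law $J=\tilde J+\dot\phi\, I$, where $\tilde J$ is the angular momentum of the reference lift and $I=|Z_1|^2+|Z_2|^2$ is determined by $\gamma$. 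Imposing $J\equiv 0$ is therefore equivalent to the linear first-order ODE $\dot\phi(t)=-\tilde J(t)/I(t)$, whose right-hand side is continuous (indeed smooth) precisely because $I(t)>0$ along the triple-collision-free curve.

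Finally I would fix $\phi(0)$ by matching the prescribed initial configuration: since $\tilde q(0)$ and $q(0)$ both realize $\gamma(0)$ they differ by a rotation, and freeness of the action makes $\phi(0)$ unique modulo $2\pi$. Integration then yields $\phi(t)=\phi(0)-\int_0^t \tilde J(\tau)/I(\tau)\,d\tau$, producing a zero-angular-momentum lift that is unique because both the ODE solution and the initial phase are unique. I expect the only genuinely delicate point to be the construction of the smooth reference lift across the loci where $Z_1$ or $Z_2$ vanishes; once a smooth $\tilde q$ is in hand, the remainder is the elementary ODE argument that gives the construction its ``fundamental calculus'' flavour.
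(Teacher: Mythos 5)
Your proposal is correct and, despite the principal-bundle packaging, is essentially the paper's own argument: the paper likewise reduces the zero-angular-momentum condition to a scalar linear ODE for a single phase, namely $\dot{\xi}_1=-r_2^2\,\dot{\xi}/I$ for the polar angle of $Z_1$ (switching to $\xi_2$ when $Z_1$ reaches the origin, which is exactly your chart-switching for the reference lift), and then integrates from the prescribed initial configuration. Indeed, taking your reference lift to be the chart with $Z_1$ real and nonnegative gives $\tilde{J}=r_2^2\dot{\xi}$, so your ODE $\dot{\phi}=-\tilde{J}/I$ is literally the paper's equation; the one imprecision on your side---constant-phase patching of the two charts produces a reference lift that is continuous but can have corners where $\dot{\xi}\neq 0$ at a switch time, hence is only piecewise smooth---is harmless, since running the ODE piece by piece yields the same zero-angular-momentum motion, whose smoothness at the switch times follows from uniqueness together with the local argument in either single chart.
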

\begin{proof}
	Let $Z_1=r_1e^{i\xi_1},  Z_2=r_2e^{i\xi_2}$ be the normalized Jacobi coordinates of $q$. Now $Z_1(0)$ and $Z_2(0)$ are given, we want to solve $(Z_1(t),Z_2(t))$ from $\gamma(t)$.
	
	Since the $\gamma(t)\in \mathbb{R}^3$ is given, we can solve $r_1(t),r_2(t)$ and $\xi(t)=\xi_2(t)-\xi_1(t)$ by \eqref{omega}. It should be reminded that $\xi_1$ and $\xi_2$ is not defined at origin. Since $\gamma(t)$ is away from triple collision, either $\xi_1(t)$ or $\xi_2(t)$ is well defined thus defines the configuration.  The rest to do is to find $\xi_1(t)$ or $\xi_2(t)$. By Lemma \ref{lemmaIJ}, the angular momentum is 
	\begin{equation}
	\begin{split}
	& r_1^2(t)\dot{\xi}_1(t)+r_2^2(t)\dot{\xi}_2(t)\\
	&= r_1^2(t)\dot{\xi}_1(t)+r_2^2(t)\dot{\xi}_1(t)+r_2^2(t)\dot{\xi}(t)\\
	&= I(t)\dot{\xi}_1(t)+r_2^2(t)\dot{\xi}(t)\\
	&= 0.
	\end{split}		
	\end{equation}
	Whenever $\xi_1(t)$ is well defined, we have \[\dot{\xi}_1(t)=-\frac{r_2^2(t)}{I(t)}=-\frac{r_2^2(t)\dot{\xi}(t)}{r_1^2(t)+r_2^2(t)}.\]
	Now $\xi_1(0)$ is given, $\xi_1(t)=\xi_1(0)+\int_{0}^{t}\dot{\xi}_1(s)ds$ until $Z_1$ goes to origin where $\xi_1$ is not defined. For this case, we can use $\xi_2(t)$ instead. The proof is complete!
\end{proof}

\begin{lemma}\label{lemmameridian}
	Let $q:[0,s]\rightarrow \Sigma\backslash\{C_1,O_1\}$ be a motion with zero angular momentum and $Z_1=r_1e^{i\xi_1},  Z_2=r_2e^{i\xi_2}$ be the normalized Jacobi coordinates of $q$. If the projected path on the shape sphere is part of a meridian $l_\xi$ connecting $C_1$ and $O_1$ for some $\xi\in [0,2\pi)$, then $\dot{\xi}_1=\dot{\xi}_2=0$.
\end{lemma}
\begin{proof}
	By the assumption, $\xi_1, \xi_2$ is well defined and
    \[\xi_2-\xi_1=\xi.\]
	It implies that 
	\[\dot{\xi}_2-\dot{\xi}_1=0.\]
	By the zero angular momentum assumption, we have
	\[J = Z_1\times \dot{Z}_1+Z_2\times \dot{Z}_2=r_1^2\dot{\xi}_1+r_2^2\dot{\xi}_2=0.\]
	Solve $\dot{\xi}_1$ and $\dot{\xi}_2$ in the two above equations, we have
	\[\dot{\xi}_1=\dot{\xi}_2=0.\]
\end{proof}
\begin{remark}
	Actually, \eqref{reconstruction} is a direct consequence of formula \eqref{eqplanar} and Lemma \ref{lemmameridian}. Because the geodesics on shape sphere from $\gamma(0)$ to $C_1$ and from $\gamma(T)$ to $C_1$ keeps $\xi_1(t)$ fixed.
\end{remark}

\textbf{Proof of Theorem \ref{thmplanar}:} For simplicity, we prove the theorem when $\xi_1,\xi_2$ and $\xi$ are well defined for all $t\in [0,T]$. That is $\gamma(t)\ne C_1$ and $\gamma(t)\ne O_1$. At the end, we'll see that this assumption can be removed. 

Given any motion $q:[0,\, s] \rightarrow \Sigma$, then the corresponding curve in shape space $\gamma(t)$ is known. So there is a unique zero-angular-momentum motion $q'(t)$ realizing $\gamma(t)$ and $q'(0)=q(0)$. Let $Z_1'=r_1'e^{i\xi_1'},Z_2'=r_2'e^{i\xi_2'}$ be Jacobi coordinates of $q'$. Clearly, we have
\begin{equation}
	\begin{split}
	r_1'(t)&=r_1(t),\ \ r_2'(t)=r_2(t),\\
	\xi_2'(t)&-\xi_1'(t)=\xi_2(t)-\xi_1(t).
	\end{split}
\end{equation}
Thus \[\dot{\xi}_2(t)-\dot{\xi}_2'(t)=\dot{\xi}_1(t)-\dot{\xi}_1'(t).\]
The angular momentum of $q(t)$ is 
\begin{equation}
\begin{split}
J(t)&=r_1^2(t)\dot{\xi}_1(t)+r_2^2(t)\dot{\xi}_2(t),\\
&= I(t)(\dot{\xi}_2(t)-\dot{\xi}_2'(t))+r_1^2(t)\dot{\xi}_1'(t)+r_2^2(t)\dot{\xi}_2'(t),\\
&= I(t)(\dot{\xi}_2(t)-\dot{\xi}_2'(t)).
\end{split}
\end{equation}
We have 
\begin{equation}
\dot{\xi}_2(t)-\dot{\xi}_2'(t)=\frac{J(t)}{I(t)}.
\end{equation}

Note that $\xi_2$ is just the angle of $q_1$ in polar coordinate. Hence 
\begin{equation}\label{eqrotationpart}
\begin{split}
\Delta\theta&=\xi_2(T)-\xi_2(0),\\
&=\int_{0}^{T}\dot{\xi}_2(t)dt,\\
&=\int_0^T\frac{J(t)}{I(t)}dt+\int_{0}^{T}\dot{\xi}_2'(t)dt,\\
&=\int_0^T\frac{J(t)}{I(t)}dt+\xi_2'(T)-\xi_2'(0).
\end{split}
\end{equation}

It is left to find $\xi_2'(T)-\xi_2'(0)$, which is the angle of rotation for the zero-angular-momentum motion $q':[0,T]\rightarrow \Sigma$. Now it suffice to consider motions with zero angular momentum.

For simplicity of notation, we denote $q$ to be a zero-angular-momentum motion(i.e. $q'$ above is now replaced by $q$).  Since we are interest in the angle of rotation  $\xi_2(T)-\xi_2(0)$, by normalizing the momentum of inertia,  it's enough to only consider the projected curves on the shape sphere. Thus we can also assume $\gamma(t)$ is on the shape sphere. 

Since
\[J(t)=r_1^2(t)\dot{\xi}_1(t)+r_2^2(t)\dot{\xi}_2(t)=0.\]
We have
\begin{equation}
\frac{\dot{\xi}_2(t)}{\dot{\xi}(t)}=\frac{\dot{\xi}_2(t)}{\dot{\xi}_2(t)-\dot{\xi}_1(t)}=\frac{r_1^2(t)}{r_1^2(t)+r_2^2(t)}=r_1^2(t).
\end{equation}

Let $\Omega_0$ be the standard area form on the shape sphere defined by $\omega_1^2+\omega_2^2+\omega_3^2=\frac{1}{4}$. Let $S(t)$ be the area enclosed by two meridians $l_{\xi(0)}$ and $l_{\xi(t)}$(Here we should choose boundary orientation so that $S(t)$ increases as $\xi(t)$ increasing). Let $D_\gamma(t)$ be the disk enclosed by a closed curve on shape sphere composed by three pieces: $\gamma(0)$ to $\gamma(t)$, $C_1$ to $\gamma(0)$ along $l_{\xi(0)}$ and $C_1$ to $\gamma(1)$ along $l_{\xi(t)}$. The orientation is chosen so that it coincide with $S(t)$ on $l_{\xi(0)}$ and $l_{\xi(t)}$. 

Note that $\xi$ is also the angle coordinate of $(\omega_2,\omega_3)$. There has 
\begin{equation}
\xi(t)-\xi(0)=2S(t),\ \ \ \dot{\xi}(t)=2\dot{S}(t).
\end{equation}
Let $R(t)=\int_{D_\gamma(t)}\Omega_0$ be the area of $D_\gamma(t)$. By direct computation from calculus, we have
\begin{equation}
\frac{\dot{R}(t)}{\dot{S}(t)}=\frac{1}{2}+\omega_1(t)=\frac{1}{2}(r_1^2(t)+r_2^2(t))+\frac{1}{2}(r_1^2(t)-r_2^2(t))=r_1^2(t).
\end{equation}
Actually, $\dot{R}(t)$ is given by rotation of the curve connecting $C_1$ to $\gamma(t)$ along $l_{\xi(t)}$ while $\dot{S}(t)$ is given by rotation of the whole meridian $l_{\xi(t)}$. Thus $\frac{\dot{R}(t)}{\dot{S}(t)}$ is ratio of the area where $\omega_1\le \omega_1(t)$ to the area of whole sphere, which is given by $\frac{1}{2}+\omega_1(t)$.

Summarizing three above equations, we have
\begin{equation}\label{eqdotxi}
\begin{split}
\frac{\dot{\xi}_2(t)}{\dot{R}(t)}&=\frac{\dot{\xi}(t)}{\dot{S}(t)}=2,\\
\dot\xi_2(t)&=2\dot{R}(t).
\end{split}	
\end{equation}
Integrating \eqref{eqdotxi} in $[0,T]$, we have
\begin{equation}\label{eqformularfor0angular}
	\xi_2(T)-\xi_2(0)=2(R(T)-R(0))=2\int_{D_\gamma}\Omega_0.
\end{equation}

Recall that  $\xi_2(T)-\xi_2(0)$ in \eqref{eqformularfor0angular} is actually $\xi_2'(T)-\xi_2'(0)$ in  \eqref{eqrotationpart}. Thus we have proved the reconstruction formula \eqref{reconstruction} for planar case under assumption that $\gamma(t)$ is away from $C_1$ and $O_1$.

If $\gamma(t)$ goes across $C_1$ or $O_1$, we can find a sequence of motion $q_{(n)}(t)$ away from $C_1,O_1$ converging to $q(t)$ in $C^1$ topology. Then we have uniform convergence for angular momentum, momentum of inertia and the projected curve on shape space. The first term of formula \eqref{reconstruction} converges uniformly while the second term may differ by $2\pi$ depending on how we choose the disk $D_{\gamma}$. This cause no confusion because a rotation of $2\pi$ is identity. Hence the assumption $\gamma(t)$ is away from $C_1$ and $O_1$ can be removed.

For piecewise smooth curves, the angle of rotation is clearly the angle sum of these smooth pieces. Thus we have proved $(i)$ of Theorem \ref{thmplanar}. $(ii)$ of Theorem \ref{thmplanar} can be proved in a similar way by applying above argument to $\xi_1$. Then $D_\gamma'$  is the disk on shape sphere bounded by three curves: $\gamma(t)$, geodesic from $\gamma(0)$ to $O_1$, geodesic from $\gamma(T)$ to $O_1$. 

\qed

In the end of this section, we give some characterization of the shape sphere as an application of Theorem \ref{thmplanar}.

Note that we've marked some points on the shape sphere. Among them, $L_1$,\, $L_2$,\, $E_1$,\, $E_2$,\, $E_3$ are the five central configurations, which are very important for the study of the planar three-body problem. Once the coordinate system is chosen, then the positions of these marked points is given which depends on masses of particles $(m_1,m_2,m_3)$. By our definition of the shape sphere, the six points $C_1$,$O_2$,$C_3$,$O_1$,$C_2$,$O_3$ arranged on the equator of the shape sphere counterclockwise and $C_i=-O_i(i=1,2,3)$. Let $(i,j,k)$ be a permutation of $(1,2,3)$, then $E_i$ is between $C_j$ and $C_k$. One can check that $E_i=O_i$ if and only if $m_j=m_k$, $E_i$ is between $C_j$ and $O_i$ if and only if $m_j>m_k$. When $m_1=m_2=m_3$, it's clear that $E_i=O_i (i=1,2,3)$ and $L_j=P_j (j=1,2)$ are the two poles on the shape sphere. The six points $E_i$s and $C_i$s is evenly distributed on the equator of the shape sphere. 

If the three masses are different, we can apply our result to find the relation of these points on the shape sphere. Assume $O_1=(1/2,0,0)$ and $C_1=(-1/2,0,0)$. Note that $P_1$ makes the area of triangle configuration largest under the condition $I=|Z_1|^2+|Z_2|^2=1$, $Z_1$ must be orthogonal to $Z_2$. For other versions of Jacobi coordinates, the corresponding two vector must also be orthogonal. Thus $P_1$ is characterized by the orientation (i.e. the positions of $1,2,3$ rotate counterclockwise) and the condition that the ortho-center coincides with the center of mass (i.e. the origin).

Consider $q:[0,T]\rightarrow \Sigma$ to be a special motion as in Fig \ref{1}. At $t=0$, $q(0)$ is a configuration corresponding to $P_1$. During the motion, $q_3$ is fixed and so is the center of mass of $q_1$ and $q_2$. $q_1$ and $q_2$ move uniformly linear towards their center of mass and collide at $t=T$. By Lemma \ref{lemmameridian}, it's a zero-angular-momentum motion. Its projected curve on the shape sphere is half of a meridian which goes from $P_1$ to $C_3$.

\begin{figure}
	\centering
	\includegraphics[height=7cm]{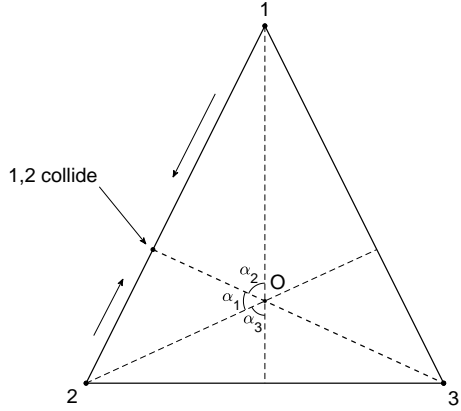}
	\caption{In the beginning, the triangle configuration corresponds to $P_1$. Body $3$ is fixed during the motion. Body $1$ and body $2$ move uniformly linear until they collide at the foot of the perpendicular. }
	\label{1}
\end{figure}

We can then compute the rotation of $q_1$ by Theorem \ref{thmplanar}. It is
\[2\int\int_{\Delta P_1C_3C_1}\Omega_0.\]
Here $\Delta P_1C_3C_1$ is the spherical triangle by closing the curve as in the proof of Theorem \ref{thmplanar}. One finds that the above integral is exactly half of the angle of rotation from $OC_1$ to $OC_3$. It's also the angle $\alpha_2$ in Fig.\ref{1}. A direct computation shows that
\[\cos \alpha_2=\sqrt{\frac{m_1m_3}{(m_1+m_2)(m_3+m_2)}}.\]
Hence we find the location of $C_3$ and $O_3=-C_3$.

Similarly, the angle of rotation from $OC_3$ to $OC_2$ is $2\alpha_1$,  the angle of rotation from $OC_2$ to $OC_1$ is $2\alpha_3$. They are given by
\[\cos \alpha_1=\sqrt{\frac{m_2m_3}{(m_2+m_1)(m_3+m_1)}}, \\
\cos \alpha_3=\sqrt{\frac{m_2m_1}{(m_3+m_2)(m_3+m_1)}}.\]

Since all $3$ body have positive masses, we have $\alpha_i\in (0,\pi/2)$, $\alpha_1+\alpha_2+\alpha_3=\pi$, $\alpha_i<\alpha_j$ if and only if $m_i<m_j$.  By our definition of the shape sphere, the six points $C_1$,$O_2$,$C_3$,$O_1$,$C_2$,$O_3$ arranged on the equator of the shape sphere counterclockwise. 

Let $(i,j,k)$ be a permutation of $(1,2,3)$, then $E_i$ is between $C_j$ and $C_k$. One can check that $E_i=O_i$ if and only if $m_j=m_k$, $E_i$ is between $C_j$ and $O_i$ if and only if $m_j>m_k$.

Next, we find the locations of $L_1$ and $L_2$ on the shape sphere. At the Lagrange point, the configuration is an equilateral triangle. Denote $\beta$ to be the angle of rotation from $Z_1$ to $Z_2$ of the equilateral triangle corresponding to $L_1$, then $\beta$ is a function of three masses. By Proposition \ref{proangle}, $L_1\in l_{\beta}$. Changing to another version of Jacobi coordinates, then there is a meridian $l_{\beta'}'$ from $C_3$ to $E_3$ such that $L_1\in l_{\beta'}'$. Thus the intersection point of $l_\beta$ and $l_{\beta'}'$ is $L_1$. The other Lagrangian point is given by $L_2=\sigma(L_1)$.

Fig.\ref{2} is an illustration for the case when $m_1<m_2<m_3$. 
\begin{figure}
	\centering
	\includegraphics[height=9.4cm,width=15.6cm]{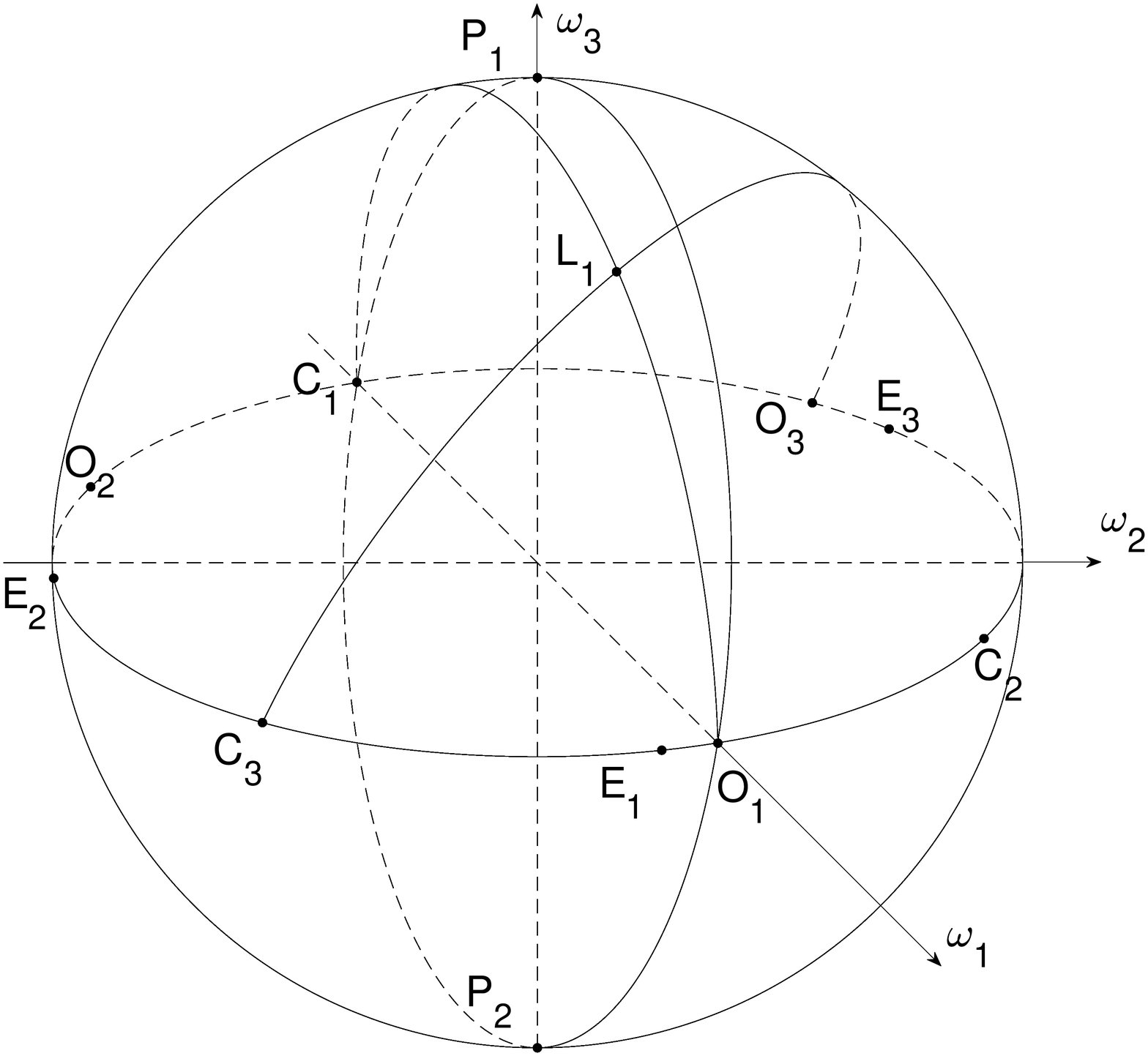}
	\caption{Marked points on the shape sphere for $m_1<m_2<m_3$}
	\label{2}
\end{figure}

\section{Reconstruction formula for spatial case}\label{sec4}
For planar case, it's easy to see what is the angle of rotation when two boundary configurations are similar. But it's not obvious for spatial case because two boundary configuration may not be in the same plane. Here we follow the idea of Montgomery \cite{Mont1996} .

Let $\mathbf{e}$ be an unit vector in $\mathbb{R}^3$, then $\mathbf{e}$ define a plane $X$.
\begin{equation*}
	X=\{x\in \mathbb{R}^3|x\cdot \mathbf{e}=0\}.
\end{equation*}
For planar configuration, a configuration uniquely defines a point on shape sphere. In $\mathbb{R}^3$, there is no nature orientation for a triangle configuration. Once a normal vector $\mathbf{n}$ is given, then $q$ uniquely defines a oriented triangle configuration hence a point on shape sphere. We move the spatial configuration $q$ to configuration on the plane $X$ in the following way: move $\mathbf{n}$ to $\mathbf{e}$ along the minimizing geodesic on unit sphere whenever $\mathbf{n}\ne -\mathbf{e}$. Thus we define a map $P$
\begin{equation}
	P: (q,\mathbf{n})\rightarrow x.
\end{equation}
where $q$ is a triangle configuration in $\mathbb{R}^3$, $x$ is the corresponding configuration on plane $X$. Clearly, $q$ and $x$ correspond to the same point on shape sphere.

The map $P$ can be restricted to one particle and $P(q_1,\mathbf{n})$ is a vector in $x_1\in X$. Let $\eta_1$ be the angle coordinate of $x_1$ in $X$, then the angle of rotation we seek for is $\eta_1(T)-\eta_1(0)$. Unfortunately, $P$ has some bad properties. $P$ is not well defined when $\mathbf{n}=-\mathbf{e}$ because the minimizing geodesic on unit sphere connecting two antipodal points  is not unique. One should also be careful about collinear configuration since the normal vector $\mathbf{n}$ can be chosen on a circle. For these reasons, we need assumptions in Theorem \ref{thmspatial} so that $P(q_1,\mathbf{n})$ is a continuous path in $X$.

Our strategy for the proof of reconstruction formula for spatial case is still to study the internal part and rotation part of motion separately. A.Guichardet in \cite{Gu} proved the following proposition.
\begin{proposition}[Proposition 3.1 in \cite{Gu}]\label{progui}
	If n=d and $q(t)$ is a smooth motion with zero angular momentum, then the hyperplane defined by $q(t)$ is constant.
\end{proposition}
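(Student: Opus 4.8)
The plan is to show that the unit normal $\mathbf{n}(t)$ to the hyperplane spanned by $q_1(t),\dots,q_d(t)$ is constant, working on any interval where the configuration spans a genuine hyperplane, so that $\mathbf{n}(t)$ is well defined up to sign and (by smoothness of $q$) can be chosen smooth. Since $|\mathbf{n}|=1$ forces $\dot{\mathbf{n}}\cdot\mathbf{n}=0$, it suffices to prove $\dot{\mathbf{n}}=0$. The starting point is the characterization of zero angular momentum from \eqref{eqdecompose}, namely the vanishing of the bivector $\sum_i m_i q_i\wedge\dot q_i$.

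First I would decompose each velocity relative to the hyperplane $H(t)=\mathbf{n}(t)^\perp$ by writing $\dot q_i=\dot q_i^H+(\dot q_i\cdot\mathbf{n})\,\mathbf{n}$ with $\dot q_i^H\in H$. Since every $q_i$ lies in $H$, this splits the angular momentum bivector into a part in $\wedge^2 H$ and a part of the form $(\,\cdot\,)\wedge\mathbf{n}$:
\begin{equation*}
\sum_i m_i q_i\wedge\dot q_i=\sum_i m_i\, q_i\wedge\dot q_i^H+\Big(\sum_i m_i(\dot q_i\cdot\mathbf{n})\,q_i\Big)\wedge\mathbf{n}.
\end{equation*}
These two summands live in complementary subspaces of $\wedge^2\mathbb{R}^d$, so vanishing of the total bivector forces each to vanish. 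The mixed term gives that $\sum_i m_i(\dot q_i\cdot\mathbf{n})\,q_i$ is parallel to $\mathbf{n}$; but this vector is a combination of the $q_i$, hence lies in $H=\mathbf{n}^\perp$, so it must in fact be zero.

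Next I would differentiate the orthogonality relations $\mathbf{n}\cdot q_i=0$ to obtain $\dot q_i\cdot\mathbf{n}=-\dot{\mathbf{n}}\cdot q_i$, and substitute this into the identity just derived to get $\sum_i m_i(\dot{\mathbf{n}}\cdot q_i)\,q_i=0$. Taking the inner product of this with $\dot{\mathbf{n}}$ produces $\sum_i m_i(\dot{\mathbf{n}}\cdot q_i)^2=0$, and since every $m_i>0$ each term must vanish, so $\dot{\mathbf{n}}\cdot q_i=0$ for all $i$. Thus $\dot{\mathbf{n}}$ is orthogonal to the span of $q_1,\dots,q_d$. Here the hypothesis $n=d$ enters decisively: when the $d$ points span the full hyperplane $H$, their orthogonal complement is the one-dimensional line $\mathbb{R}\mathbf{n}$, so $\dot{\mathbf{n}}\parallel\mathbf{n}$; combined with $\dot{\mathbf{n}}\cdot\mathbf{n}=0$ this gives $\dot{\mathbf{n}}=0$, and the hyperplane is constant.

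The main obstacle is the degenerate configurations where the $q_i$ fail to span a hyperplane (for instance collinear triangles when $d=3$), since there the normal $\mathbf{n}$ is not determined and the dimension count in the final step breaks down. I expect to handle this by running the argument on the open set where the span has full dimension $d-1$ and extending the conclusion across the isolated degenerate instants by continuity of $q(t)$; if the span drops dimension on an entire subinterval the statement is vacuous there, as no hyperplane is defined to begin with.
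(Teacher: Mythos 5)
The paper does not prove this proposition at all --- it is quoted from Guichardet's paper \cite{Gu} and used as a black box --- so your argument has to stand on its own; there is no internal proof to compare routes with. The good news is that your core computation is correct and complete on any time interval where the configuration genuinely spans a hyperplane: the splitting $\wedge^2\mathbb{R}^d=\wedge^2H\oplus(H\wedge\mathbb{R}\mathbf{n})$ does force the two components of $\sum_i m_iq_i\wedge\dot q_i$ to vanish separately; differentiating $\mathbf{n}\cdot q_i=0$ turns the mixed component into $\sum_i m_i(\dot{\mathbf{n}}\cdot q_i)\,q_i=0$; pairing with $\dot{\mathbf{n}}$ and using $m_i>0$ gives $\dot{\mathbf{n}}\cdot q_i=0$ for all $i$; and since the $q_i$ span $H$, this puts $\dot{\mathbf{n}}\in H^\perp\cap\mathbf{n}^\perp=\{0\}$. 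That is a clean and correct proof of the non-degenerate case.

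The genuine gap is your final paragraph: the conclusion cannot be ``extended across isolated degenerate instants by continuity,'' and no repair is possible, because for merely smooth motions the statement is false at such instants. Let $\mathbf{e}_1,\mathbf{e}_2,\mathbf{e}_3$ be the standard basis of $\mathbb{R}^3$, take $d=n=3$, $m_1=m_2=m_3=1$, let $a(t)=e^{-1/t^2}$ with $a(0)=0$, let $u(t)=\mathbf{e}_2$ for $t\le 0$ and $u(t)=\mathbf{e}_3$ for $t\ge 0$, and set
\begin{equation*}
q_1(t)=a(t)\,u(t),\qquad q_2(t)=-\frac{1}{2}a(t)\,u(t)-\mathbf{e}_1,\qquad q_3(t)=-\frac{1}{2}a(t)\,u(t)+\mathbf{e}_1 .
\end{equation*}
Since $a$ vanishes to infinite order at $t=0$, this motion is smooth; the center of mass is at the origin; and the angular momentum vanishes identically, because $q_1\wedge\dot q_1=0$ while the contributions of $q_2$ and $q_3$ equal $\pm\frac{\dot a}{2}\,\mathbf{e}_1\wedge u$ and cancel. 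Yet the configuration spans the $\mathbf{e}_1\mathbf{e}_2$-plane for $t<0$ and the $\mathbf{e}_1\mathbf{e}_3$-plane for $t>0$: the plane jumps across the single collinear instant $t=0$. So the proposition as stated is only true under the extra hypothesis that $q(t)$ spans a hyperplane for every $t$ (or for real-analytic motions, where a lowest-order Taylor coefficient argument applied to the bivector $Z_1\wedge Z_2$ excludes such jumps); with that hypothesis your proof is complete. This limitation is harmless for the way the paper uses Proposition \ref{progui}: it is invoked only infinitesimally, at triangular configurations, to conclude that $v_I$ keeps $\mathbf{n}(t)$ fixed, and collinear instants are excluded separately in Theorem \ref{thmspatial} via the set $\Delta$ of \eqref{eqbad}.
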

This means that the $V_I$ part of velocity is tangent to the hyperplane defined by $q(t)$. We'll find that $V_I$ part can be treat similarly as in planar case. Thus the second term of \eqref{reconstruction} and \eqref{reconstructionspatial} are the same.

Now $G=SO(3)$, $Lie(G)$ can be identified with $\mathbb{R}^3$ so that for any vector $a\in \mathbb{R}^3$, $\frac{a}{|a|}$ is the axis of rotation and $|a|$ is angular velocity. For a given configuration, there is a obvious map 
\begin{equation}\label{eqsigma}
\sigma: \mathbb{R}^3\cong Lie(G)\rightarrow J\cong V_R
\end{equation}
where $J$ is the space of admissible angular momentum at $q$ and it's easy to see that $J\cong V_R$. Here $\sigma$ maps a vector to the corresponding angular momentum.

As pointed out in \cite{Gu,Mont1996}, $\sigma$ is a positive semi-definite symmetric linear map. $\sigma$ is an isomorphism when $q$ is a triangular configuration, while $\sigma$ has an one dimension kernel at a collinear configuration because any rotation about configuration axis keeps $q$ fixed. This is bad because different elements in $Lie(G)$ may give the same velocity and angular momentum. We should defined $\sigma^{-1}(J(t))$ to be $\frac{J(t)}{I(t)}$ at collinear configurations. Although $\sigma^{-1}$ is well defined away from collinear configurations, its norm $||\sigma^{-1}||$ goes to infinity as $q$ goes to collinear configuration. It means that a tiny angular momentum may cause a large angle velocity. And the integral $\eqref{reconstructionspatial}$ becomes singular at collinear configurations.

\textbf{Proof of Theorem \ref{thmspatial}:}  The assumptions $(i)$ and $(ii)$ of Theorem \ref{thmspatial} is to ensure that $P(q_1(t),\mathbf{n}(t))$ uniquely defines a smooth path $x(t)$ in $X$ if $\mathbf{n}(0)$ is given. Our goal is study the motion of $x(t)$. Still, we assume $q_1(t)$ is not the origin for $\forall t\in [0,T]$, so $\eta_1(t)$ is well-defined all the time. We need to study how $\eta_1(t)$ changes depending on the spatial motion.

If the projected curve on shape space $\gamma(t)$ is given, then the shape and size of configuration is known. The two unit vectors $\mathbf{n}(t)$ together with $\frac{q_1(t)}{|q_1(t)|}$ are enough to determine $q(t)$. Since we are interest in the angle of rotation, it's convenient to use angle coordinates. Let $(\phi_n, \eta_n)$ be coordinate for $\mathbf{n}$ on the unit sphere, where $\phi_n\in [0,\pi]$ is the angle between $\mathbf{n}$ and $\mathbf{e}$, $\eta_n$ corresponds to the angle coordinate on the plane $X$. Note that $\frac{q_1(t)}{|q_1(t)|}$ is on a circle $S^1$ orthogonal to $\mathbf{n}$, then $\frac{q_1(t)}{|q_1(t)|}$ can be represented by a angle $\theta$ once $\mathbf{n}$ is given. Let $\eta$ be the angle coordinate in the plane $X$. For simplicity, we can choose coordinate system for $\theta$ so that: $\eta=\theta$ when $\mathbf{n}=\mathbf{e}$, the angle coordinate of $P(\theta,\mathbf{n})$ on $X$ equals to $\eta_n+\theta$ when $\phi_n\in (0,\pi)$. We have an illustration for the coordinate system in Figure \ref{3}.
\begin{figure}
	\centering
	\includegraphics[height=9.4cm,width=15.6cm]{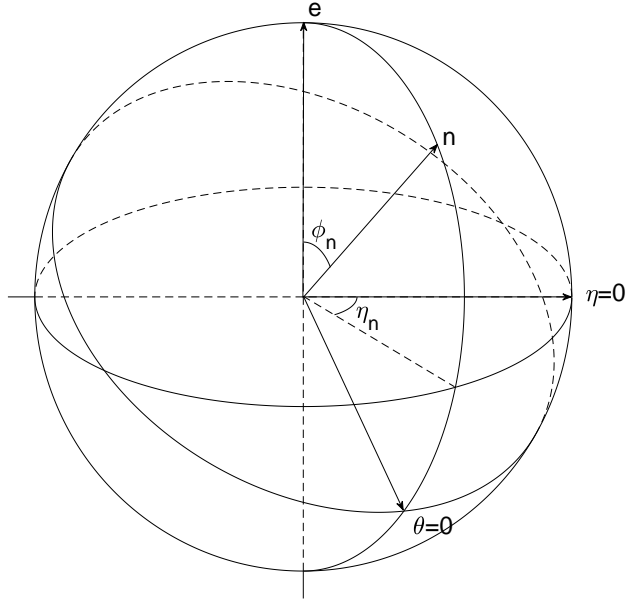}
	\caption{An illustration for $\phi_n, \eta_n$ and vectors defined by $\eta=0, \theta=0$}
	\label{3}
\end{figure} 

Now we have a path $(q_1,\mathbf{n})$ and $P(q_1,\mathbf{n})$ defines $x_1$. The aim is to compute $\eta_1(T)-\eta_1(0)$. Let $\theta_1$ be $\theta$ coordinate of $q_1$. By using coordinates defined above, it holds 
\begin{equation}
	\dot{\eta}_1(t)=\dot{\eta}_n(t)+\dot{\theta}_1(t).
\end{equation}

Denote
\begin{equation}\label{psi}
\Psi: q(t)\rightarrow (q_1(t),\mathbf{n}) \rightarrow (\phi_n(t),\eta_n(t),\theta_1(t)).
\end{equation}
As in the introduction section, the velocity can be decomposed into $V_R$ and $V_I$ part as in \eqref{eqdecompose}. We shall investigate how the two parts of velocities effects $\dot{\eta}_n(t)$ and $\dot\theta_1(t)$.

Let $v_I(t)$ the $V_I$ part velocity of $\dot{q}(t)$. Proposition \ref{progui} says that $v_I$ is tangent to the configuration plane, thus keeps the normal vector $\mathbf{n}$ and $\eta_n(t)$ fixed. But it effects $\theta_1(t)$ as in the planar case, which has been carefully studied in last section. Thus 
\begin{equation}\label{eqdotvi}
d\Psi(v_I)=(0,0,2\dot{R}(t)).
\end{equation}

Let $v_R(t)$ the $V_R$ part velocity of $\dot{q}(t)$. $v_R(t)$ can be seen as velocity given by a rigid rotation of $q(t)$ with angular momentum $J(t)$. 

When $\phi_n\in (0,\pi)$, $\mathbf{e},\mathbf{n},\mathbf{e}\wedge\mathbf{n}$ forms a basis of $\mathbb{R}^3$. By \eqref{psi} and the definition of $\phi_n,\eta_n,\theta_1$, one finds that $\dot\phi_n,\dot\eta_n$ and   $\dot\theta_1$ are exactly the angular velocities about $\mathbf{e}\wedge\mathbf{n},\mathbf{e}$ and $\mathbf{n}$. Whenever $q$ is a triangular configuration, $\sigma^{-1}$ is an isomorphism.  $\sigma^{-1}$  maps the angular momentum to a unique vector in $\mathbb{R}^3$ which give an axis and an angular velocity. It holds
\begin{equation}\label{eqdotvr}
d\Psi(v_R(t))=\Big(\sigma_{\wedge}^{-1}(J),\sigma_{\mathbf{e}}^{-1}(J),\sigma_{\mathbf{n}}^{-1}(J)\Big).
\end{equation}
where $\Big(\sigma_{\wedge}^{-1}(J),\sigma_{\mathbf{e}}^{-1}(J),\sigma_{\mathbf{n}}^{-1}(J)\Big)$ is the coordinate of $\sigma^{-1}(J)$ given by the direct decomposition about the basis $\mathbf{e},\mathbf{n},\mathbf{e}\wedge\mathbf{n}$.

By \eqref{eqdotvi} and \eqref{eqdotvr},
\begin{equation}
\begin{split}
(\dot\phi_n(t),\dot\eta_n(t),\dot\theta_1(t))&=d\Psi(v(t)),\\
&=d\Psi(v_I(t))+d\Psi(v_R(t)),\\
&=\Big(\sigma_{\wedge}^{-1}(J(t)),\sigma_{\mathbf{e}}^{-1}(J(t)),\sigma_{\mathbf{n}}^{-1}(J(t))+2\dot{R}(t)\Big).
\end{split}
\end{equation}
Hence
\begin{equation}\label{eqdotxix}
\dot\eta_1(t)=\dot{\eta}_n(t)+\dot{\theta}_1(t)=\sigma_{\mathbf{e}}^{-1}(J(t))+\sigma_{\mathbf{n}}^{-1}(J(t))+2\dot{R}(t).
\end{equation}

When $\mathbf{n}=\pm\mathbf{e}$, then $\eta_n$ is not well-defined. Angular velocity about $\mathbf{e}$ keeps $\mathbf{n}$ fixed. Only angular velocity about $\mathbf{n}$ contributes to $\eta_1$. In this case
\begin{equation}\label{eqneqe}
\dot\eta_1(t)=\dot{\theta}_1(t)=\mathbf{n}\cdot\sigma^{-1}(J(t))+2\dot{R}(t).
\end{equation}
$\mathbf{n}=\mathbf{e}$ is actually a limit case of \eqref{eqdotxix} when $\mathbf{n}\rightarrow\mathbf{e}$, while this does not hold for $\mathbf{n}=\mathbf{-e}$. 

We define 
\begin{equation}
F(J(t))=\left\{
\begin{array}{lrl}
\sigma_{\mathbf{e}}^{-1}(J(t))+\sigma_{\mathbf{n}}^{-1}(J(t)), &\text{if}& \phi_n\in (0,\pi),\\ 
\mathbf{n}\cdot\sigma^{-1}(J(t)), &\text{if}& \mathbf{n}=\pm\mathbf{e}.
\end{array} \right.
\end{equation}
Then $F(J(t))$ is the angular velocity of $x_1(t)$ given by $v_R(t)$. Hence
\begin{equation}\label{eqdoteta1}
\dot{\eta}_1(t)=F(J(t))+2\dot{R}(t).
\end{equation}

If \eqref{eqdoteta1} holds for any $t\in [0,T]$, by integrating \eqref{eqdoteta1} we have proved
\begin{equation}\label{eqdeltaeta}
\Delta\eta_1=\int_{0}^T\dot{\eta}_1(t) dt =\int_{0}^TF(J(t))dt+2\int_{D_\gamma}\Omega_0.
\end{equation}
where $D_\gamma$ is the same as in \eqref{reconstruction}. 

\eqref{eqdeltaeta} is the formula we are looking for. We have proved \eqref{eqdoteta1} whenever $q(t)$ is a triangular configuration for all $t\in [0,T]$. For collinear configurations, \eqref{eqdoteta1} does not always hold. To see this, let $q(t)$ be a collinear configuration and $\mathbf{e}_3$ be axis of configuration. Assume $\mathbf{e}\ne \mathbf{e}_3$, let $\mathbf{n}$ be a normal vector in the plane spanned by $\mathbf{e}$ and $\mathbf{e}_3$. We also assume that $\mathbf{e}\ne \mathbf{n}$. It's not hard to see that rotations of $q(t)$ about $\mathbf{e}$ and $\mathbf{n}$ with different angle velocities may have the same $\dot{q}(t)$ and $J(t)$. But they lead to different $\dot{\eta}_1(t)$ which is absurd. We shall see that \eqref{eqdoteta1} fails to hold exactly at those bad collinear configurations show in \eqref{eqbad}

\textbf{Claim:} If $q(t)$ is a collinear configuration and $\mathbf{e}$ is orthogonal to the axis of configuration, then \eqref{eqdoteta1} holds. 

Denote $\mathbf{e}_3$ to be the axis of $q(t)$. Clearly, $\sigma^{-1}(J(t))$ and $\sigma^{-1}(J(t))+s\mathbf{e}_3$ in $Lie(G)$ give the same $v_R(t)$ for any $s\in \mathbb{R}$. If $\mathbf{e}\perp \mathbf{e}_3$, then $\mathbf{e}_3$ is collinear with $\mathbf{e}\wedge\mathbf{n}$. Note that angular velocity about $\mathbf{e}\wedge\mathbf{n}$ does not contribute to $\dot\eta_1(t)$. Thus the angular velocity of $x_1(t)$ given by $v_R(t)$ is uniquely defined by $\sigma^{-1}(J(t))$. The claim is proved.

If $\mathbf{e}$ is not orthogonal to $\mathbf{e}_3$, then angular velocity about $\mathbf{e}_3$ will have nonzero direct decomposition about $\mathbf{e}$ and $\mathbf{n}$. Thus the angular velocity of $x_1(t)$ given by $v_R$ is not uniquely defined. When $J(t)=0$, there is no $v_R$ thus only $v_I$ contribute to $\dot{\eta}_1$. For these reasons, we define the following set
\begin{equation}\label{eqbad1}
\Delta=\Big\{t\in [0,T]\Big|q(t)\  \text{is collinear} , J(t)\ne 0, \mathbf{e}\  \text{is not orthognal to the axis of configuration} \Big\}
\end{equation}
$\Delta$ is the set when \eqref{eqdoteta1} fails to hold. If $\Delta$ has zero measure, then \eqref{eqdeltaeta} still holds. Thus we complete the proof of our main result if $x(t)$ is smooth. 

Now it suffices to show that $x(t)$ is smooth under assumptions $(i)$ and $(ii)$. Clearly, $P(q(t),\mathbf{n}(t))$ is well-defined and continuous whenever $\mathbf{n}(t)\ne-\mathbf{e}$. If there exist some $t_0$ such that $\mathbf{n}(t_0)= -\mathbf{e}$, then $P(q_1,\mathbf{n})$ is not well-defined at $t_0$. We should consider $\lim_{t\rightarrow  t_0^{\pm}}P(q_1,\mathbf{n})$. In angle coordinates $(\phi, \eta, \theta)$, $P$ is defined in $[0,\pi)\times S^1\times S^1$. It can be extend to $(\phi,\eta,\theta)\in \pi\times S^1\times S^1$ continuously. Note that we regards $(\phi, \eta, \theta)$ as an unit vector representing $q_1$ and $\mathbf{n}$. By our choice of coordinate system, it holds
\[ (\pi, \eta_n, \theta_1)= (\pi, \eta_n+\alpha, \theta_1+\alpha),\ \ \forall \alpha\in \mathbb{R}.\]
Denote
\begin{equation}\label{Pextension}
\begin{split}
&\eta_n^{\pm}=\lim_{t\rightarrow  t_0^{\pm}}\eta_n,\ \ \ \ \theta_1^{\pm}=\lim_{t\rightarrow  t_0^{\pm}}\theta_1 \\
&P(\pi,\eta_n^{\pm},\theta_1^{\pm})=\lim_{t\rightarrow  t_0^{\pm}}P(q_1,\mathbf{n})=\lim_{t\rightarrow  t_0^{\pm}}P(\phi_n,\eta_n,\theta_1).
\end{split}
\end{equation}

If $\mathbf{n}(t_0)=\mathbf{-e}, \dot{\mathbf{n}}(t_0)\ne 0$, clearly $\eta_n^+=\eta_n^-+\pi$, $\theta_1^+=\theta_1^-+\pi$ and
\[(\pi,\eta_n^{+},\theta_1^{+})=(\pi,\eta_n^{-},\theta_1^{-})=(q_1(t_0), \mathbf{n}(t_0)).\]
We have
\[\lim_{t\rightarrow t_0^+}\eta_1(t)=\eta_n^++\theta_1^+=\eta_1^-+\theta_1^-+2\pi=\lim_{t\rightarrow t_0^-}\eta_1(t)+2\pi.\]
Thus 
\begin{equation}
P(\pi,\eta_n^{+},\theta_1^{+})=P(\pi,\eta_n^{-},\theta_1^{-}).
\end{equation}
$t_0$ is actually a removable singularity of $P(q_1,\mathbf{n})$ and $x_1(t)$ can be made smooth at $t_0$. Thus \eqref{eqdeltaeta} still holds in this case. 

The proof of Theorem \ref{thmspatial} is complete.

\qed

\begin{remark}
	The assumption that $\Delta$ has a zero measure can not be dropped. We can see this by considering following motions. Let $q(0)$ be a collinear configuration and $\mathbf{e}_3$ be axis of configuration. Assume $\mathbf{e}\ne \mathbf{e}_3$, let $\mathbf{n}$ be a normal vector in the plane spanned by $\mathbf{e}$ and $\mathbf{e}_3$. Now consider two motions defined by pure rotations of $q(0)$ about $\mathbf{e}$ and $\mathbf{n}$, then the corresponding motions in $X$ is also pure rotations with different angular velocities. But $\eqref{reconstructionspatial}$ gives the same $\dot{\eta}_1(t)$ for the two motions. 
\end{remark}

\section{On Montgomery's question}
In \cite{Mont1996}, Montgomery proved \eqref{eqmont} when the angular momentum is preserved and the initial and final configuration are similar triangles. He asked the following question: can reconstruction formula and calculations be reformulated solely in terms of $\Sigma/SO(2)$? In this section, we apply \eqref{reconstructionspatial} to Montgomery's case and give a positive answer to this question.

In his situation, the angular momentum is preserved and in the direction of $\mathbf{e}$. Clearly, angular momentum at a collinear configuration must be orthogonal to the axis of configuration. Thus $\Delta$ is empty. Theorem \ref{thmspatial} can be applied directly to this case. 

Note that the $SO(2)$ action is defined by rotations about $\mathbf{e}$. It's clear that the second term in the right of  \eqref{reconstructionspatial} depends only on oriented similar classes of configuration. It's calculation is actually reduced to the shape sphere, which is a subset of $\Sigma/SO(2)$. To show that reconstruction formula and calculations be reformulated solely in terms of $\Sigma/SO(2)$, it suffices to prove the following proposition.

\begin{proposition}\label{promont}
	If $J(t)=j(t)\cdot\mathbf{e}$, where $j(t)$ is a real valued function of $t$, then $\sigma_{\mathbf{e}}^{-1}(J(t))+\sigma_{\mathbf{n}}^{-1}(J(t))$ only depends on the class of $q$ in $\Sigma/SO(2)$.
\end{proposition}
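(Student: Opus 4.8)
The plan is to derive the result from the $SO(2)$-equivariance of the inertia operator $\sigma$ together with the fact that the prescribed angular momentum $J=j(t)\mathbf{e}$ is itself fixed by the $SO(2)$-action. I would write the action of $SO(2)$ on $\Sigma$ as $q\mapsto Rq$, where $R$ is a rotation about the axis $\mathbf{e}$, so in particular $R\mathbf{e}=\mathbf{e}$. Everything appearing in the statement --- the map $\sigma$, the normal $\mathbf{n}$, and the frame $\{\mathbf{e},\mathbf{n},\mathbf{e}\wedge\mathbf{n}\}$ --- is attached to $q$, and the goal is to compare its value at $q$ with its value at $Rq$.

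First I would record how $\sigma$ transforms. Since $\sigma_q(\omega)=\sum_i m_i\, q_i\wedge(\omega\wedge q_i)$ is the angular momentum produced by the angular velocity $\omega$, the rotation identities $Ra\wedge Rb=R(a\wedge b)$ and $\omega\wedge Rq_i=R(R^{-1}\omega\wedge q_i)$ give, after summation,
\[
\sigma_{Rq}(\omega)=R\,\sigma_q(R^{-1}\omega),\qquad\text{equivalently}\qquad \sigma_{Rq}^{-1}=R\,\sigma_q^{-1}\,R^{-1}.
\]
Because $R$ fixes $\mathbf{e}$ we have $R^{-1}J=J$, and therefore, setting $\omega:=\sigma_q^{-1}(J)$,
\[
\sigma_{Rq}^{-1}(J)=R\,\sigma_q^{-1}(R^{-1}J)=R\,\sigma_q^{-1}(J)=R\omega.
\]
In words, rotating the configuration about $\mathbf{e}$ merely rotates the angular-velocity vector by the same $R$.

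Next I would track the frame. The normal transforms covariantly, $\mathbf{n}_{Rq}=R\mathbf{n}$ (a rotation preserves orientation, so there is no sign ambiguity), whence $\mathbf{e}\wedge\mathbf{n}_{Rq}=R\mathbf{e}\wedge R\mathbf{n}=R(\mathbf{e}\wedge\mathbf{n})$; and $R\mathbf{e}=\mathbf{e}$. Thus the frame attached to $Rq$ is exactly $R$ applied to the frame attached to $q$. Expanding $\omega=\sigma_{\mathbf{e}}^{-1}(J)\,\mathbf{e}+\sigma_{\mathbf{n}}^{-1}(J)\,\mathbf{n}+\sigma_{\wedge}^{-1}(J)\,(\mathbf{e}\wedge\mathbf{n})$ and applying $R$ shows that $R\omega$ has, in the frame of $Rq$, the identical coordinates. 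Hence each of $\sigma_{\mathbf{e}}^{-1}(J)$ and $\sigma_{\mathbf{n}}^{-1}(J)$ is separately $SO(2)$-invariant, so in particular their sum descends to $\Sigma/SO(2)$, which is the assertion.

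The crux is therefore the equivariance $\sigma_{Rq}=R\sigma_q R^{-1}$ and the covariance $\mathbf{n}_{Rq}=R\mathbf{n}$; once these are in place the invariance of the coordinates is automatic, since applying one linear map simultaneously to a vector and to a basis leaves the coordinates unchanged. The only points needing care are the orientation convention for $\mathbf{n}$ (so that $\mathbf{n}_{Rq}=R\mathbf{n}$, not $-R\mathbf{n}$) and the degenerate collinear locus; in Montgomery's setting the angular momentum at a collinear configuration is orthogonal to the axis, so $\Delta=\varnothing$, and on the admissible collinear locus $\sigma^{-1}(J)=J/I$ is equivariant for the same reason, making the argument uniform.
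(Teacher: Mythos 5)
Your proof is correct and follows essentially the same route as the paper's: both rest on the equivariance $\sigma_{Rq}^{-1}=R\,\sigma_q^{-1}R^{-1}$, the invariance $R^{-1}J=J$ coming from $J=j(t)\mathbf{e}$ and $R\mathbf{e}=\mathbf{e}$, and the observation that the frame $\{\mathbf{e},\mathbf{n},\mathbf{e}\wedge\mathbf{n}\}$ transforms by the same $R$, so that coordinates are unchanged. The only difference is that you derive the equivariance explicitly from the formula $\sigma_q(\omega)=\sum_i m_i\,q_i\wedge(\omega\wedge q_i)$, whereas the paper asserts it as clear; this is a welcome addition, not a different argument.
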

\begin{proof}
	Note that $\sigma$ depends on $q$. Given any $R\in SO(2)$, denote $q'=R(q)$, $J'(t)=R(J(t))$, $\mathbf{n'}=R(\mathbf{n})$. Let $\sigma$ and $\sigma'$ be the corresponding maps from $Lie(G)$ to angular momentum. Clearly, there is 
	\[R\big(\sigma^{-1}(J(t))\big)=(\sigma')^{-1}(J'(t)).\]
	Since $J(t)=j(t)\cdot\mathbf{e}$, it holds $J'(t)=J(t)$. Thus
	\begin{equation}\label{eqsig}
    (\sigma')^{-1}(J(t))=R\big(\sigma^{-1}(J(t))\big).
    \end{equation}	
    We are interested in the direct decomposition of $(\sigma')^{-1}(J(t))$ according to a basis depending on $q'$. By \eqref{eqsig}, $(\sigma')^{-1}(J(t))$ and $\sigma^{-1}(J(t))$ differ by the action $R$, so are the corresponding bases. Thus
    \begin{equation}
    \Big(\sigma_{\wedge}^{-1}(J(t)),\sigma_{\mathbf{e}}^{-1}(J(t)),\sigma_{\mathbf{n}}^{-1}(J(t))\Big)=\Big((\sigma')_{\wedge'}^{-1}(J(t)),(\sigma')_{\mathbf{e}}^{-1}(J(t)),(\sigma')_{\mathbf{n'}}^{-1}(J(t))\Big).
    \end{equation}
    This completes the proof.

\end{proof}


\begin{thebibliography}{99}
\bibitem{Gu} A. Guichardet, On rotation and vibration motions of molecules, {\em Ann. Inst. H. Poincare, Phys. Theor.} {\bf 40} (1984), 329--342.
	
\bibitem{Iw} T. Iwai, A geometric setting for classical molecular dynamics, {\em Ann. Inst. Henri Poincare} {\bf 47} (1987), 199--219.

\bibitem{Mont1996} R. Montgomery, The geometric phase of the three-body problem, {\em Nonlinearity} {\bf 9} (1996), 1341--1360.

\bibitem{Mont2015} R. Montgomery, The three-body problem and the shape sphere, {\em American Mathematical Monthly} {\bf 122} (2015), 299--321.



%




\end{thebibliography}
\end{document}